\author{Joseph Flenner}
\address{University of Saint Francis, 2701 Spring Street, Fort Wayne, IN 46808, U.S.A.}
\email{jflenner@sf.edu}
\author{Vincent Guingona}
\address{University of Notre Dame, Department of Mathematics, 255 Hurley Hall, Notre Dame, IN 46556, U.S.A.}
\email{guingona.1@nd.edu}
\urladdr{http://www.nd.edu/~vguingon/}
\thanks{Both authors were supported by NSF grant DMS-0838506.}
\title{Convexly orderable groups and valued fields}
\date{\today}
\newtheorem{thm}{Theorem}[section]
\newtheorem{cor}[thm]{Corollary}
\newtheorem{lem}[thm]{Lemma}
\newtheorem{prop}[thm]{Proposition}
\newtheorem{ques}[thm]{Open Question}
\theoremstyle{remark}
\newtheorem{rem}[thm]{Remark}
\newtheorem{expl}[thm]{Example}
\theoremstyle{definition}
\newtheorem{defn}[thm]{Definition}
\newcommand{\Th}{\mathrm{Th} }
\newcommand{\fM}{\mathfrak{M}}
\newcommand{\fN}{\mathfrak{N}}
\newcommand{\cB}{\mathcal{B}}
\newcommand{\cP}{\mathcal{P}}
\newcommand{\cL}{\mathcal{L}}
\renewcommand{\phi}{\varphi}
\newcommand{\ep}{\varepsilon}
\newcommand{\fA}{\mathfrak{A}}
\newcommand{\fB}{\mathfrak{B}}
\newcommand{\fG}{\mathfrak{G}}
\newcommand{\fF}{\mathfrak{F}}
\newcommand{\set}[2]{\left\{ #1\ \middle|\  #2\right\}}
\newcommand{\ex}[2]{\exists #1 \left( #2 \right)}
\newcommand{\PP}{\mathrm{PP}}
\newcommand{\PPt}{\widetilde{\mathrm{PP}}}
\subjclass[2010]{Primary: 03C60. Secondary: 20A05, 06F15, 12J10.}
\keywords{Convexly orderable, VC-minimality, ordered groups, valued fields, abelian groups}
\begin{document}

\begin{abstract}
 We consider the model theoretic notion of convex orderability, which fits strictly between the notions of VC-minimality and dp-minimality. In some classes of algebraic theories, however, we show that convex orderability and VC-minimality are equivalent, and use this to give a complete classification of VC-minimal theories of ordered groups and abelian groups.  Consequences for fields are also considered, including a necessary condition for a theory of valued fields to be quasi-VC-minimal. For example, the $p$-adics are not quasi-VC-minimal.
\end{abstract}

\maketitle

\section{Introduction}

After many of the advancements in modern stability theory, some model theorists have been seeking to adapt techniques from stable model theory to other families of unstable, yet still well-behaved theories. These include o-minimal theories as well as theories without the independence property. 
As these notions of model-theoretic tameness proliferate, in each case, two natural questions arise: what are the useful consequences of the property, and which interesting theories have the property? As an example of the latter line of inquiry, an ordered group is weakly o-minimal if and only if it is abelian and divisible, and an ordered field is weakly o-minimal if and only if it is real closed \cite{mms}.  Similar characterizations of dp-minimality for abelian groups can be found in \cite{ADHMS}, and results on dp-minimal ordered groups can be found in \cite{sim}.

Resting comfortably among these conditions is VC-minimality, introduced by Adler in \cite{adl}. Most of the classical variations on minimality, such as (weak) o-minimality, strong minimality, and C-minimality, imply VC-minimality. On the other hand, VC-minimality is strong enough to imply many properties of recent interest, such as dependence and dp-minimality.

The question of consequences of VC-minimality has been addressed elsewhere (see e.g.~\cites{cs,fg,gl}). In this paper, we seek to identify the VC-minimal theories among some basic classes of algebraic structures. Here a problem quickly arises. While it tends to be straightforward to verify that a theory is VC-minimal, the definition of VC-minimality does not lend itself easily to negative results. Except in some special cases, previously it had only been possible to show a theory is not VC-minimal by showing that it is not dp-minimal or dependent.

To sidestep this problem, we explore the intermediate notion of convex orderability, first introduced in \cite{gl}. All VC-minimal theories are also convexly orderable, and while the converse fails in general, in many cases it is, in a sense, close enough. The strategy, thus, is twofold. Given a class of algebraic theories, we use known results (for example, on o-minimal ordered groups) to produce a list of VC-minimal theories from the class. We then study convex orderability in relation to the class of theories to establish that the list is exhaustive.

In this way, we give a complete classification of VC-minimal theories of ordered groups (Section \ref{ordgpssect}) and abelian groups (Section \ref{abgpssect}). Partial results, in the form of necessary conditions for VC-minimality, are given for ordered fields (Section \ref{ordgpssect}) and valued fields (Section \ref{valfieldsect}). For valued fields, the weaker condition of quasi-VC-minimality is also evaluated.

The remainder of this section gives the necessary background on VC-minimality, and Section \ref{cosect} presents some useful facts about convex orderability.

\subsection{VC-minimality}

Let $X$ be any set and let $\cB \subseteq \cP(X)$.  We say that $\cB$ is \emph{directed} if, for all $A, B \in \cB$, one of the following conditions holds:
\begin{enumerate}
 \item $A \subseteq B$,
 \item $B \subseteq A$, or
 \item $A \cap B = \emptyset$.
\end{enumerate}
Let $T$ be a first-order $\cL$-theory, and fix a set of formulas
\[
 \Psi = \set{\psi_i(x; \bar{y}_i)}{i \in I}
\]
(note that the singleton $x$ is a free variable in every formula of $\Psi$, but the parameter variables $\bar{y}_i$ may vary).  Then $\Psi$ is \emph{directed} if, for all $\fM \models T$, 
\[
 \set{\psi_i(\fM; \bar{a})}{i \in I, \bar{a} \in M^{|\bar{y}_i|}}
\]
is directed, where $\psi_i(\fM; \bar{a}) = \set{b \in M}{\fM \models \psi_i(b; \bar{a})} \subseteq M$.  

We say that $T$ is \emph{VC-minimal} if there exists a directed $\Psi$ such that all (parameter-definable) formulas $\phi(x)$ are $T$-equivalent to a boolean combination of instances of formulas from $\Psi$ (i.e., formulas of the form $\psi(x; \bar{a})$ for $\psi \in \Psi$).  In this case, $\Psi$ is called a \emph{generating family} for $T$.

For example, it is easy to see that strongly minimal theories are VC-minimal; take $\Psi = \{ x = y \}$.  Similarly, o-minimal theories are VC-minimal; take $\Psi = \{ x \le y, x = y \}$.  A prototypical example of a VC-minimal theory which is neither stable nor o-minimal is the theory of algebraically closed valued fields; take $\Psi = \{ v(z)<v(x-y), v(z)\le v(x-y) \}$, recalling the swiss cheese decomposition of Holly \cite{hol1}. By a simple type-counting argument, one can see that formulas $\phi(x; \bar{y})$ in VC-minimal theories have VC-density $\le 1$ (see \cite{ADHMS}).  From this, one can conclude that VC-minimal theories are dp-minimal (see, for instance, \cite{dgl}). 

Finally, $T$ is \emph{quasi-VC-minimal} if there exists a directed $\Psi$ such that all formulas $\phi(x)$ are $T$-equivalent to a boolean combination of instances of formulas from $\Psi$ and parameter-free formulas. Clearly, all VC-minimal theories are quasi-VC-minimal.  Moreover, the theory of Presburger arithmetic, $\Th(\mathbb{Z}; +, \le)$, is quasi-VC-minimal; take $\Psi = \{ x \le y, x = y \}$.  Again, by the same type-counting argument, one can check that quasi-VC-minimal theories are dp-minimal.

\section{Convex orderability}\label{cosect}

VC-minimality is a powerful condition having many consequences (see, for example, \cites{adl,cs,fg,gl}).  However, it can be difficult to verify that a theory is not VC-minimal. In attempting to classify VC-minimal theories of certain kinds, therefore, we instead look at a related notion called convex orderability.

\begin{defn}\label{Defn_CO}
An $\cL$-structure $\fM$ is \emph{convexly orderable} if there exists a linear order $\unlhd$ on $M$ (not necessarily definable) such that, for all $\phi(x; \bar{y})$, there exists $k < \omega$ such that, for all $\overline{b} \in M^{|\bar{y}|}$, $\phi(\fM; \overline{b})$ is a union of at most $k$ $\unlhd$-convex subsets of $M$.
\end{defn}

Note in the above that $k$ may depend on $\phi$, but $\unlhd$ does not. In \cite{gl}, it is shown that if $\fM$ is convexly orderable and $\fM \equiv \fN$, then $\fN$ is convexly orderable as well.  Therefore, convex orderability is a property of a theory. Moreover, the next proposition follows immediately from the definition.

\begin{prop}\label{CO_reducts}
The property of convex orderability is closed under reducts. That is, if $T$ is a convexly orderable $\cL$-theory and $\cL'\subseteq \cL$, then the reduct $T\restriction \cL'$ is also convexly orderable.
\end{prop}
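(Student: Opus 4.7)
The plan is to unwind the definition directly. The key observation is that any $\cL'$-formula is already an $\cL$-formula, so a single linear order can serve as a witness for both the original structure and its reduct.

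First, fix an arbitrary model $\fM \models T$ and let $\unlhd$ be a linear order on $M$ witnessing that $\fM$ is convexly orderable, as provided by Definition \ref{Defn_CO}. Let $\fM' := \fM \restriction \cL'$, which has the same underlying universe $M$. I propose to use this same $\unlhd$ on $M$ as the candidate witness for convex orderability of $\fM'$.

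Next, take any $\cL'$-formula $\phi(x; \bar{y})$. Since $\cL' \subseteq \cL$, the formula $\phi$ is also an $\cL$-formula, so convex orderability of $\fM$ gives some $k < \omega$ such that for every $\bar{b} \in M^{|\bar{y}|}$, the set $\phi(\fM; \bar{b})$ is a union of at most $k$ $\unlhd$-convex subsets of $M$. But $\phi(\fM'; \bar{b}) = \phi(\fM; \bar{b})$ because the interpretations of symbols from $\cL'$ are unchanged when passing to the reduct. Hence the same $k$ works for $\phi$ in $\fM'$, and $\fM'$ is convexly orderable.

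Finally, to conclude that the \emph{theory} $T \restriction \cL'$ (rather than just the particular reduct) is convexly orderable, I would invoke the result from \cite{gl} quoted immediately above the proposition, namely that convex orderability is preserved under elementary equivalence, hence is a property of the complete theory. Since $\fM' \models T \restriction \cL'$ is convexly orderable, so is the theory. There is no real obstacle here; the argument is pure bookkeeping, which is why the authors call the proposition immediate.
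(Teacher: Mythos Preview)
Your argument is correct and matches the paper's treatment: the authors give no proof beyond the remark that the proposition ``follows immediately from the definition,'' and your unwinding of Definition~\ref{Defn_CO} together with the elementary-equivalence fact from \cite{gl} is exactly that immediate verification.
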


For later reference, we cite the following from \cite{gl}.

\begin{prop}[Corollary 2.9 of \cite{gl}]\label{COdp}
If $T$ is convexly orderable, then $T$ is dp-minimal.
\end{prop}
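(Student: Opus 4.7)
The plan is to prove the contrapositive. Assume $T$ is not dp-minimal; choose a sufficiently saturated $\fM\models T$ containing an ict-pattern of depth $2$ in the singleton variable $x$: formulas $\phi_1(x;\bar y)$, $\phi_2(x;\bar z)$, mutually indiscernible sequences $(\bar a_i)_{i<\omega}$ and $(\bar b_j)_{j<\omega}$, and, for every $(i,j)$, a witness $c_{i,j}\in M$ satisfying $\phi_1(x;\bar a_i)\wedge\phi_2(x;\bar b_j)\wedge\bigwedge_{i'\ne i}\neg\phi_1(x;\bar a_{i'})\wedge\bigwedge_{j'\ne j}\neg\phi_2(x;\bar b_{j'})$. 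Since convex orderability is a property of the theory, it suffices to show that no linear order $\unlhd$ on $M$ can witness convex orderability of $\fM$.

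Suppose for contradiction such a $\unlhd$ exists. Applying the convex orderability definition to $\phi_1$ and $\phi_2$ yields integer bounds $k_1$ and $k_2$: every $U_i:=\phi_1(\fM;\bar a_i)$ is a union of at most $k_1$ $\unlhd$-convex sets, and every $V_j:=\phi_2(\fM;\bar b_j)$ of at most $k_2$. Restrict to the $N\times N$ subarray for $N$ to be chosen and list the $N^2$ distinct witnesses $c_{i,j}$ in $\unlhd$-increasing order. The central observation is a \emph{block bound}: between any two row-$i$ witnesses lying in the same $\unlhd$-convex piece of $U_i$, no witness from another row can be inserted, since such an intermediate point would lie in $U_i$ by convexity, contradicting $c_{i',j'}\notin U_i$ for $i'\ne i$. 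Hence each row label $i$ occupies at most $k_1$ maximal consecutive blocks in the linear arrangement, and symmetrically each column label $j$ occupies at most $k_2$.

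The concluding step is a counting inequality. Let $R$ (resp.\ $C$) be the number of adjacent position pairs whose row (resp.\ column) labels differ. The block bound yields $R\le Nk_1-1$ and $C\le Nk_2-1$. Any two adjacent witnesses are distinct and so differ in at least one coordinate, giving $R+C\ge N^2-1$. Combining, $N^2-1\le N(k_1+k_2)-2$, which fails for $N\ge k_1+k_2$. Taking such an $N$ yields the required contradiction.

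The main obstacle is producing the ict-pattern with its witnesses in a single sufficiently saturated model, which is a standard compactness exercise; the block bound uses only convexity, and the final inequality is elementary double-counting.
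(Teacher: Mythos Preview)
Your argument is correct. The paper itself does not prove this proposition: it is simply quoted from \cite{gl} (Corollary~2.9 there) and used as a black box, so there is no in-paper proof to compare against. What you have written is a valid, self-contained proof via the ict-pattern characterisation of dp-minimality.

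A couple of minor remarks on presentation. First, mutual indiscernibility of the parameter sequences is not actually used anywhere in your counting argument; you only need, for a fixed finite $N$, witnesses $c_{i,j}$ ($i,j<N$) satisfying the finitary pattern $\phi_1(x;\bar a_i)\wedge\phi_2(x;\bar b_j)\wedge\bigwedge_{i'<N,\,i'\ne i}\neg\phi_1(x;\bar a_{i'})\wedge\bigwedge_{j'<N,\,j'\ne j}\neg\phi_2(x;\bar b_{j'})$, which already exists in any model realising the relevant finite type. Second, the step ``each row label $i$ occupies at most $k_1$ maximal consecutive blocks'' deserves one more sentence: since $U_i\cap\{c_{i',j'}:i',j'<N\}=\{c_{i,j}:j<N\}$ and $U_i$ is a union of $\le k_1$ $\unlhd$-convex sets, the restriction of $U_i$ to the finite $\unlhd$-ordered witness set is a union of $\le k_1$ intervals, which is exactly the block bound. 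With those clarifications the write-up is complete.
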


Furthermore, the following proposition is a simple modification of Proposition 2.5 of \cite{gl}.

\begin{prop}\label{Thm_ConvexOrderability01}
 Suppose $X$ is a set and $\cB \subseteq \cP(X)\setminus\{\emptyset\}$ is directed.  Then, there exists a linear ordering $\unlhd$ on $X$ so that every $B \in \cB$ is a $\unlhd$-convex subset of $X$.
\end{prop}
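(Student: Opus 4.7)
The plan is to prove this by a Zorn's lemma argument. Consider the poset of pairs $(Y, \le_Y)$, with $Y \subseteq X$ and $\le_Y$ a linear order on $Y$ making $B \cap Y$ convex for every $B \in \cB$, partially ordered by extension. Chains in this poset have upper bounds given by the union of their orderings, since convexity is a finitary condition on pairs of points and passes to the limit. By Zorn's lemma there is a maximal $(Y^*, \unlhd)$, and the remaining task is to show $Y^* = X$.

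Suppose for contradiction that $x \in X \setminus Y^*$. The plan is to insert $x$ into $Y^*$, producing a strict extension of $(Y^*, \unlhd)$ in the poset and contradicting maximality. Constraints on where $x$ may be placed arise only from $\cC := \{B \in \cB : x \in B\}$, since for $B \notin \cC$ the set $B \cap (Y^* \cup \{x\})$ equals the already-convex $B \cap Y^*$. Directedness of $\cB$, together with the fact that every $B \in \cC$ contains $x$, forces any two elements of $\cC$ to be nested, so $\cC$ is a chain under inclusion, and hence $\{B \cap Y^* : B \in \cC\}$ is a chain of $\unlhd$-convex subsets of $Y^*$.

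The main obstacle, and the step that genuinely uses directedness, is producing a single placement of $x$ simultaneously compatible with every $B \in \cC$. I would define $L^* \subseteq Y^*$ to be the union, over $B \in \cC$ with $B \cap Y^* \ne \emptyset$, of the elements of $Y^*$ lying strictly $\unlhd$-below $B \cap Y^*$, and $U^*$ analogously above (sets $B$ with $B \cap Y^* = \emptyset$ impose no constraint, since $\{x\}$ is trivially convex). Using the chain structure of $\cC$, one checks that $L^*$ is $\unlhd$-downward closed, $L^* \cap U^* = \emptyset$, and $U^* \subseteq Y^* \setminus L^*$, so $(L^*, Y^* \setminus L^*)$ is a valid Dedekind cut of $Y^*$. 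Placing $x$ at this cut — extending $\unlhd$ to $Y^* \cup \{x\}$ by declaring $y \unlhd x$ for $y \in L^*$ and $x \unlhd y$ for $y \in Y^* \setminus L^*$ — makes $(B \cap Y^*) \cup \{x\}$ a $\unlhd$-convex subset of $Y^* \cup \{x\}$ for every $B \in \cC$, yielding the desired contradiction and thus $Y^* = X$.
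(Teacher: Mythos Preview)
The paper does not actually prove this proposition; it merely records it as a simple modification of Proposition~2.5 of \cite{gl}. Your Zorn's-lemma approach is a reasonable and essentially correct way to supply a proof, but there is one genuine gap in the argument as written.

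You assert that ``constraints on where $x$ may be placed arise only from $\cC$,'' because for $B \notin \cC$ the set $B \cap (Y^* \cup \{x\})$ equals ``the already-convex $B \cap Y^*$.'' This is where the reasoning slips: $B \cap Y^*$ is convex in $(Y^*,\unlhd)$, but that does \emph{not} automatically make it convex in the extended order on $Y^* \cup \{x\}$. If $x$ were inserted strictly between two elements $a \lhd c$ of $B \cap Y^*$, then $\{a,c\} \subseteq B$ but $x \notin B$, and convexity would fail. So sets outside $\cC$ do impose a constraint on the placement of $x$, and this must be checked.

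The good news is that your particular cut $(L^*, Y^* \setminus L^*)$ does satisfy it, and the verification is another use of directedness. Suppose $a,c \in B \cap Y^*$ with $a \in L^*$, say $a \in L_{B'}$ for some $B' \in \cC$ with $B' \cap Y^* \neq \emptyset$. Since $x \in B' \setminus B$, we have $B' \not\subseteq B$, so by directedness either $B \subseteq B'$ or $B \cap B' = \emptyset$. The first case gives $a \in B' \cap Y^*$, contradicting $a \in L_{B'}$. In the second case the disjoint convex sets $B \cap Y^*$ and $B' \cap Y^*$ are comparable in $\unlhd$, and $a \in L_{B'}$ forces $B \cap Y^* \subseteq L_{B'} \subseteq L^*$; hence $c \in L^*$ as well. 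Thus $B \cap Y^*$ lies entirely on one side of the cut, and inserting $x$ there preserves its convexity. With this check added, your argument goes through.
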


From this, a simple compactness argument gives the corollary.

\begin{cor}[Theorem 2.4 of \cite{gl}]\label{Cor_VCMinCO}
 If $T$ is VC-minimal and $\fM \models T$, then $\fM$ is convexly orderable.
\end{cor}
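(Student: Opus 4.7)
The plan is to combine Proposition \ref{Thm_ConvexOrderability01} with a compactness argument extracting a uniform bound on the complexity of boolean combinations.

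First I would fix a generating family $\Psi = \set{\psi_i(x;\bar{y}_i)}{i \in I}$ for $T$ and let
\[
 \cB = \set{\psi_i(\fM;\bar{a})}{i \in I,\ \bar{a} \in M^{|\bar{y}_i|}} \setminus \{\emptyset\}.
\]
Since $\Psi$ is directed and the empty-set case is trivial to handle separately, $\cB$ is a directed subfamily of $\cP(M) \setminus \{\emptyset\}$. Applying Proposition \ref{Thm_ConvexOrderability01} to $\cB$ yields a linear order $\unlhd$ on $M$ with respect to which every nonempty instance $\psi_i(\fM;\bar{a})$ is convex.

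Next I need to control arbitrary formulas $\phi(x;\bar{y})$. For each such $\phi$, VC-minimality gives, for every completion $q(\bar{y})$ of $T$ and every $\bar{b} \models q$, a boolean combination $\theta_{\bar{b}}(x)$ of instances of formulas from $\Psi$ (possibly using extra parameters) with $\phi(\fM;\bar{b}) = \theta_{\bar{b}}(\fM)$. The crucial step is a compactness argument: for each $\bar{b}$ the equivalence is witnessed by a single formula of a certain syntactic shape (a boolean combination of some bounded number of instances), and by compactness applied to $T$ together with $\bar{y}$-types, finitely many such shapes suffice to cover all choices of $\bar{b}$. This yields an integer $N_\phi < \omega$, depending only on $\phi$, such that for every $\bar{b} \in M^{|\bar{y}|}$, $\phi(\fM;\bar{b})$ is a boolean combination of at most $N_\phi$ instances from $\Psi$.

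Finally I would invoke the elementary observation that a boolean combination of $N$ convex subsets of a linearly ordered set is a union of at most $N+1$ convex subsets: the at most $2N$ endpoints partition the line into at most $2N+1$ pieces on each of which the truth value of the boolean combination is constant, and merging consecutive ``in'' pieces gives at most $N+1$ convex components. Taking $k = N_\phi + 1$ completes the verification of Definition \ref{Defn_CO}. I expect the main obstacle to be the compactness step, since one must ensure that the boolean combinations witnessing equivalence can be taken from a single finite list uniformly in $\bar{b}$; the rest of the argument is essentially bookkeeping once Proposition \ref{Thm_ConvexOrderability01} has produced $\unlhd$.
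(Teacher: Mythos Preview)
Your proposal is correct and follows essentially the same approach the paper sketches: apply Proposition~\ref{Thm_ConvexOrderability01} to the instances of the generating family to obtain $\unlhd$, then invoke compactness to get a uniform bound $N_\phi$ on the number of instances needed in the boolean combination for each $\phi(x;\bar{y})$, and finally bound the number of $\unlhd$-convex components. The paper's own proof is just the phrase ``a simple compactness argument gives the corollary'' (deferring to \cite{gl}), and your write-up is precisely that argument spelled out; the only minor quibble is that the ``$2N$ endpoints'' language presumes more structure than an arbitrary linear order has, but the counting of sign changes that underlies it is valid in general.
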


By contrast, the above corollary does not hold for quasi-VC-minimal theories, as the $\emptyset$-definable sets may be quite complicated. However, restricting our attention to a single formula, we obtain a localized result for quasi-VC-minimal theories. In the following, notice that $\unlhd$ \emph{does} depend on the formula $\phi$.

\begin{cor}\label{Cor_QuasiVCMinOrder}
 If $T$ is a quasi-VC-minimal theory, $\fM \models T$, and $\phi(x; \overline{y})$ is a formula, then there exists a linear ordering $\unlhd$ on $M$ and $k < \omega$ such that, for all $\overline{b} \in M^{|\overline{y}|}$, $\phi(\fM; \overline{b})$ is a union of at most $k$ $\unlhd$-convex subsets of $M$.  That is, $T$ is `locally convexly orderable'.
\end{cor}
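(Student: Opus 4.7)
The plan is to adapt the compactness argument behind Corollary~\ref{Cor_VCMinCO} and then refine the resulting convex ordering to accommodate the parameter-free formulas that may occur in the decomposition of $\phi$.

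First, I would fix $\phi(x; \bar y)$ and run a standard compactness argument on the parameter variables $\bar y$: quasi-VC-minimality guarantees that each instance $\phi(x; \bar b)$ is a Boolean combination of $\Psi$-instances and parameter-free formulas, and by enumerating templates (which specify both the $\psi_i \in \Psi$ and the $\theta_j$ used, along with a schema of how parameters are substituted), compactness produces a finite subfamily $\psi_1(x; \bar y_1), \ldots, \psi_r(x; \bar y_r) \in \Psi$, finitely many parameter-free formulas $\theta_1(x), \ldots, \theta_s(x)$, and a bound $n < \omega$ such that, for every $\bar b \in M^{|\bar y|}$, $\phi(x; \bar b)$ is $T$-equivalent to a Boolean combination of at most $n$ instances of $\psi_1, \ldots, \psi_r$ together with formulas from $\{\theta_1, \ldots, \theta_s\}$. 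This is essentially the same compactness move used to derive Corollary~\ref{Cor_VCMinCO}, with the extra $\theta_j$'s absorbed into the template enumeration.

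Second, I would apply Proposition~\ref{Thm_ConvexOrderability01} to the directed family
\[
 \cB = \set{\psi_i(\fM; \bar a)}{1 \le i \le r,\ \bar a \in M^{|\bar y_i|}} \setminus \{\emptyset\}
\]
to obtain a linear order $\unlhd_0$ on $M$ making every member of $\cB$ $\unlhd_0$-convex. To handle the $\theta_j$'s, I would then modify $\unlhd_0$ as follows: let $A_1, \ldots, A_t$ (with $t \le 2^s$) enumerate the nonempty atoms of the Boolean algebra generated by $\theta_1(\fM), \ldots, \theta_s(\fM)$, fix any linear order on the blocks $A_1, \ldots, A_t$, and define $\unlhd$ by placing the blocks in that order and using $\unlhd_0$ within each block. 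Each atom $A_i$ is then a $\unlhd$-convex interval, so every $\theta_j(\fM)$ is a union of at most $t$ convex sets. For any $B \in \cB$, each $B \cap A_i$ is $\unlhd_0$-convex (an intersection of convex sets) and lies inside the convex block $A_i$ where $\unlhd$ and $\unlhd_0$ agree, hence is $\unlhd$-convex; so $B$ itself becomes a union of at most $t$ $\unlhd$-convex sets.

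Finally, a routine cut-point count shows that any Boolean combination of at most $n$ sets, each itself a union of at most $t$ $\unlhd$-convex intervals, is a union of at most $k := 2nt + 1$ $\unlhd$-convex intervals; applying this to the decomposition of $\phi(\fM; \bar b)$ delivers the desired uniform bound. The main obstacle is the initial compactness step, where one must be careful to bound both the number of $\Psi$-instances appearing and the pool of parameter-free formulas uniformly in $\bar b$; once this is in place, the block-refinement of $\unlhd_0$ and the interval counting are essentially bookkeeping.
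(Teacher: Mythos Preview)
Your proposal is correct and follows essentially the same line as the paper's proof: compactness to extract finitely many directed formulas and parameter-free formulas, a partition of $M$ by the atoms of the latter, Proposition~\ref{Thm_ConvexOrderability01} to order within the pieces, and concatenation of the resulting block orders. The only slip is the parenthetical ``an intersection of convex sets'' (the atoms $A_i$ need not be $\unlhd_0$-convex in $M$), but your next clause already supplies the correct reason: $B \cap A_i$ is convex for $\unlhd_0$ restricted to the $\unlhd$-block $A_i$, hence $\unlhd$-convex in $M$.
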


\begin{proof}
 By compactness, there exists $k_0 < \omega$, $\delta(x; \overline{z})$ a directed formula, and a $\emptyset$-definable partition of $M$ via the finite set of formulas $\Theta(x)$ so that, for each $\overline{b} \in M^{|\overline{y}|}$, $\phi(\fM; \overline{b})$ is a boolean combination of at most $k_0$ instances of $\delta$ and formulas from $\Theta$. (More precisely, compactness yields $k_0$ and a finite set of formulas, while coding tricks allow one to compress a finite set of directed formulas into the single formula $\delta$.)  

Let $k = k_0 | \Theta | + 1$ and, for each $\theta \in \Theta$, let $\delta_\theta(x; \overline{z})$ be the formula $\delta(x; \overline{z}) \wedge \theta(x)$. Note that each $\delta_\theta$ is directed, as $\delta$ is.  Hence, by Theorem \ref{Thm_ConvexOrderability01}, for each $\theta \in \Theta$, there exists $\unlhd_\theta$ a linear ordering on $\theta(\fM)$ so that every instance of $\delta_\theta$ is $\unlhd_\theta$-convex.  We then concatenate the orderings $\unlhd_\theta$ in an arbitrary (but fixed) sequence to form a single linear ordering $\unlhd$ on $M$.  

Now, for any $\overline{b} \in M^{|\overline{y}|}$ and $\theta \in \Theta$, $\phi(x; \overline{b}) \wedge \theta(x)$ is a boolean combination of at most $k_0$ instances of $\delta_\theta$, each of which is $\unlhd$-convex.  Therefore, $\phi(\fM; \overline{b})$ is a union of at most $k = k_0 | \Theta | + 1$ $\unlhd$-convex subsets of $M$.
\end{proof}

One of the original motives for defining convex orderability was to give an analog to VC-minimality which is closed under reducts. However, the converse to Corollary \ref{Cor_VCMinCO} does not hold. The dense circle order is convexly orderable but not VC-minimal (for more information, see \cite{adl}). It is, in fact, a reduct of (a definitional expansion of) dense linear orders without endpoints, which is o-minimal and hence VC-minimal. On the other hand, the dense circle order becomes VC-minimal if one allows a single parameter in the generating family.

Let us call a theory \emph{VC-minimal with parameters} if there exists a directed generating family as in the original definition, but allowing parameters from some distinguished model in the formulas. One could then ask whether VC-minimality with parameters is closed under reducts. An example in \cite{acfm} shows that this is still not the case. Recalling Proposition \ref{CO_reducts}, therefore, there are convexly orderable theories which are not VC-minimal even with parameters.

Nevertheless, in the following sections we will see several instances where convex orderability serves as a useful proxy for VC-minimality. In particular, we use Corollaries \ref{Cor_VCMinCO} and \ref{Cor_QuasiVCMinOrder} to answer questions about which algebraic structures of various kinds are convexly orderable, VC-minimal, and quasi-VC-minimal.

\section{Ordered groups}\label{ordgpssect}

Let $\fG = (G; \cdot, \le)$ be an infinite ordered group and let $T = \Th(\fG)$.  We prove the following theorem.

\begin{thm}\label{Thm_VCMiniffDivisible}
 The following are equivalent:
 \begin{enumerate}
  \item $\fG$ is abelian and divisible.
  \item $T$ is o-minimal,
  \item $T$ is VC-minimal,
  \item $T$ is convexly orderable.
 \end{enumerate}
\end{thm}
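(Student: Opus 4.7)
The plan is to establish the cycle $(1) \Rightarrow (2) \Rightarrow (3) \Rightarrow (4) \Rightarrow (1)$, three of whose implications follow from classical results or from material earlier in the paper. For $(1) \Rightarrow (2)$, I would invoke the classical theorem that divisible abelian ordered groups admit quantifier elimination in the language of ordered $\mathbb{Q}$-vector spaces, and hence are o-minimal. For $(2) \Rightarrow (3)$, I would take the directed generating family $\Psi = \{x \le y,\; x = y\}$: instances are singletons or $\le$-half-lines, any two of which are pairwise nested or disjoint, so $\Psi$ is directed, and o-minimality ensures every parameter-definable one-variable formula is a boolean combination of such instances. The implication $(3) \Rightarrow (4)$ is immediate from Corollary \ref{Cor_VCMinCO}.

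The main work is $(4) \Rightarrow (1)$, which I would split into two stages. For \emph{abelianness}, convex orderability implies dp-minimality by Proposition \ref{COdp}, and I would then invoke Simon's classification of dp-minimal ordered groups \cite{sim} to conclude that $\fG$ is abelian.

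For \emph{divisibility}, the heart of the proof, I would argue by contradiction: assume $\fG$ is abelian but $[G : nG] > 1$ for some $n \ge 2$, and let $\unlhd$ witness convex orderability. The key formula to consider is
\[
 \phi(x; y_1, y_2, y_3) := (y_1 \le x \le y_2) \wedge \exists z\,(x = y_3 + n \cdot z),
\]
whose instances intersect $\le$-intervals with cosets of $nG$; let $k$ be the uniform bound on the number of $\unlhd$-convex pieces per instance. In a sufficiently saturated elementary extension, I would pick $[y_1, y_2]$ containing many densely interleaved coset representatives and let $y_3$ range over the $m = [G : nG]$ cosets; a pigeonhole or Ramsey-type argument should then show that $k$ is too small to realize each coset slice as a bounded-convex set while simultaneously the enclosing interval is bounded-convex under a single $\unlhd$.

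The main obstacle is clearly this divisibility step. Since convex orderability permits $k$ to depend on the formula, one cannot force a contradiction by scaling formula complexity; it must come from many instances of a single fixed formula. Formalizing the intuition that the dense algebraic interleaving of cosets is incompatible with $\unlhd$-bounded-convexity — for example, via a careful counting of $\unlhd$-cuts induced by moving $y_3$ through coset representatives inside a long interval — is the technically delicate part.
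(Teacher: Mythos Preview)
Your handling of $(1)\Rightarrow(2)\Rightarrow(3)\Rightarrow(4)$ and of abelianness in $(4)\Rightarrow(1)$ matches the paper exactly. The divisibility argument, however, has a genuine gap.

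The problem is that your chosen formula $\phi(x;y_1,y_2,y_3)$, with $n$ fixed, cannot by itself contradict convex orderability when $[G:nG]$ is finite. Take the paradigm case $G=\mathbb{Z}$, $n=2$. Define $\unlhd$ by listing all even integers in their usual order, followed by all odd integers in their usual order. Then every instance of $\phi$ --- an $\le$-interval intersected with a coset of $2\mathbb{Z}$ --- is a single $\unlhd$-convex set, so $k=1$ suffices for $\phi$. Of course this particular $\unlhd$ fails to convexly order $(\mathbb{Z};+,\le)$ for \emph{other} formulas (e.g.\ cosets of $3\mathbb{Z}$), but your argument never invokes any formula besides $\phi$, so it cannot detect this. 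Saturation does not help: the obstruction is that finitely many cosets, however densely interleaved in $\le$, can always be separated into blocks by a suitable $\unlhd$. No Ramsey or pigeonhole argument on instances of a single $\phi$ with bounded index $[G:nG]$ will close this.

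The paper's route is to manufacture \emph{infinitely many} pairwise disjoint cofinal definable sets and play them against the uniform bound for a single formula, namely the $\le$-rays. Concretely, if $pG\neq G$ one takes
\[
D_{p,n}=\set{x>0}{p^n\mid x \text{ and } p^{n+1}\nmid x}
\]
for $n=0,1,2,\ldots$; these are pairwise disjoint and each is $\le$-cofinal. A short lemma then shows that in any convexly orderable linearly ordered structure, one cannot have infinitely many pairwise disjoint cofinal definable sets: from each $D_{p,n}$ extract a cofinal $\unlhd$-convex piece $C_n$, line up $C_1,\ldots,C_{2k+1}$ in $\unlhd$-order (where $k$ is the convexity bound for $[a,\infty)_\le$), and choose $a$ above one point from each $C_j$; cofinality then forces $[a,\infty)_\le$ to have at least $k+1$ $\unlhd$-components. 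The point is that the unbounded family $\{D_{p,n}\}_{n<\omega}$ --- not a single coset partition of bounded size --- is what overwhelms the fixed bound $k$ coming from the ray formula.
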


This is a generalization of Theorem 5.1 of \cite{mms}, which is itself a generalization of Theorem 2.1 of \cite{pilst}.  The implications (1) $\Rightarrow$ (2) $\Rightarrow$ (3) $\Rightarrow$ (4) are well-known (or clear from the previous section), so it will suffice to show that (4) $\Rightarrow$ (1).  

Thus, suppose that $T$ is convexly orderable. By Proposition 3.3 of \cite{sim}, all dp-minimal ordered groups are abelian. Using Proposition \ref{COdp}, therefore, we already have that $\fG$ is abelian and it remains only to show that it is divisible. We begin with a general lemma about convexly orderable ordered structures.

\begin{lem}\label{Thm_CofinalDisjointSets}
 If $\fM = (M; \le, ...)$ is a linearly ordered structure that is convexly orderable, then there do not exist definable sets $X_0, X_1, ... \subseteq M$ that are pairwise disjoint and coterminal (that is, cofinal or coinitial) in $M$.
\end{lem}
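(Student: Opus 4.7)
The plan is to argue by contradiction. Suppose such a sequence $X_0, X_1, \ldots$ exists. Each $X_i$ is either $\le$-cofinal or $\le$-coinitial in $M$, so by pigeonhole infinitely many share a direction, and after renaming I may assume all $X_i$ are $\le$-cofinal (the coinitial case is symmetric). Fix a convex ordering $\unlhd$ of $\fM$ as in Definition \ref{Defn_CO}.

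First I would pass from the $X_i$ to well-behaved $\unlhd$-convex cofinal subsets. Each $X_i$ is definable, so convex orderability applied to the defining formula of $X_i$ expresses $X_i$ as a finite union of $\unlhd$-convex sets; since $X_i$ is $\le$-cofinal, at least one of these pieces $C_i \subseteq X_i$ must itself be $\le$-cofinal. The sequence $(C_i)_{i<\omega}$ then consists of pairwise disjoint, $\unlhd$-convex, $\le$-cofinal sets. Because any two disjoint $\unlhd$-convex sets are linearly comparable (one is $\unlhd$-entirely below the other), I may write $C_i \lhd C_j$ for this relation and re-label any finite subcollection of the $C_i$ to be $\lhd$-increasing.

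Next I would apply convex orderability to the single base-language formula $\psi(x;y) := x \le y$, obtaining a uniform $k < \omega$ such that every down-set $\{x : x \le b\}$ is a union of at most $k$ $\unlhd$-convex subsets of $M$. Choose $2k+1$ of the $C_i$, relabeled so that $C_0 \lhd C_1 \lhd \cdots \lhd C_{2k}$, and by $\le$-cofinality of each $C_i$ successively pick $a_i \in C_i$ with $a_0 <_\le a_1 <_\le \cdots <_\le a_{2k}$. Setting $b := a_{2k}$ and $B := \{x : x \le b\}$, the $a_i$ all lie in $B$, and $a_i \in C_i$ with $C_i \lhd C_{i+1}$ yields $a_0 \lhd a_1 \lhd \cdots \lhd a_{2k}$ in $\unlhd$.

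Finally I would decompose $B$ into at most $k$ $\unlhd$-convex pieces. Since each piece is $\unlhd$-convex and the $a_i$ are $\unlhd$-monotone, each piece intersects $\{a_0, \ldots, a_{2k}\}$ in a consecutive run of indices; distributing $2k+1$ elements into $k$ runs forces some run to have length at least three, so there is a piece $D$ containing $a_s, a_{s+1}, a_{s+2}$. Then for any $c \in C_{s+1}$ we have $a_s \lhd c \lhd a_{s+2}$ by definition of $\lhd$ on the $C$'s, and $\unlhd$-convexity of $D$ forces $c \in D$, so $C_{s+1} \subseteq D \subseteq B = \{x : x \le a_{2k}\}$, contradicting the $\le$-cofinality of $C_{s+1}$. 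The main obstacle I anticipate is the quantitative mismatch inherent in convex orderability: the bound on $\unlhd$-convex pieces is formula-dependent, and the $X_i$ may be defined by different formulas, so one cannot uniformly control their decompositions. The maneuver that bypasses this is to route the entire argument through the single fixed formula $x \le y$ from the language of $\fM$, whose lone bound $k$ suffices provided enough cofinal convex pieces (namely $2k+1$) are fed into the pigeonhole to trap a whole cofinal piece inside a bounded down-set.
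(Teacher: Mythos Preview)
Your proof is correct and follows essentially the same strategy as the paper's: pass by pigeonhole to cofinal $X_i$, extract $\le$-cofinal $\unlhd$-convex pieces $C_i\subseteq X_i$, $\unlhd$-order $2k+1$ of them where $k$ is the uniform bound on the number of $\unlhd$-convex pieces of a $\le$-ray, and derive a contradiction. The only cosmetic difference is that the paper uses up-rays $[a,\infty)_{\le}$ and directly exhibits an alternating sequence $c_1\lhd b_2\lhd c_3\lhd\cdots\lhd c_{2k+1}$ of points in and out of the ray, whereas you use down-sets $(-\infty,b]_{\le}$ and a pigeonhole on three consecutive $a_s,a_{s+1},a_{s+2}$ to trap the entire cofinal piece $C_{s+1}$ inside a $\le$-bounded set.
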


\begin{proof}
Suppose that $\fM$ is convexly ordered by $\unlhd$.  Suppose that there exists definable sets $X_0, X_1, ... \subseteq M$ that are pairwise disjoint and $\le$-coterminal in $M$.  By the pigeonhole principle, we may assume that all $X_i$ are either $\le$-cofinal or $\le$-coinitial in $M$.  Without loss of generality, suppose all are $\le$-cofinal in $M$.  By convex orderability, for each $i$, $X_i$ is a union of finitely many $\unlhd$-convex subsets of $M$.  Therefore, there exists some $\unlhd$-convex subset $C_i \subseteq X_i$ such that $C_i$ is $\le$-cofinal in $M$.

Because the rays $[a,\infty)_\le$ are uniformly definable, there is a natural number $k$ such that every $[a,\infty)_\le$ is the union of at most $k$ $\unlhd$-convex sets.  Now consider the sets $C_1,\ldots,C_{2k+1}$. Since these are $\unlhd$-convex and pairwise disjoint, we may arrange the indices so that
\[
C_{i_1}\lhd C_{i_2}\lhd\ldots\lhd C_{i_{2k+1}}.
\]

For each $j\le 2k+1$, choose $b_j \in C_{i_j}$, and fix $a > \max \set{b_j}{1\le j\le 2k+1}$. By $\le$-cofinality of $C_{i_j}$, for each $j$ we may also choose $c_j \in C_{i_j} \cap [a,\infty)_\le$. Thus we have
\[
c_1 \lhd b_2 \lhd c_3 \lhd \ldots \lhd b_{2k} \lhd c_{2k+1}
\]
with each $c_j \in [a,\infty)_\le$ and each $b_j \notin [a,\infty)_\le$. It follows that for $j=0,\ldots,k$, each $c_{2j+1}$ lies in a separate $\unlhd$-convex component of $[a,\infty)_\le$. This contradiction implies that $\fM$ is not convexly orderable, as required.
\end{proof}

We return to the case of $T = \Th(\fG)$, where $\fG=(G;+,\le)$ is a convexly orderable ordered group. For $k < \omega$, let $k \mid x$ be the formula $\ex{y}{k\cdot y=x}$. For each natural number $n\ge 1$ and prime $p$, define the set
\[
D_{p,n}=\set{x\in G}{x>0, p^n\mid x \text{ and } p^{n+1}\nmid x}.
\]

\begin{lem}\label{Lem_Cofinal}
Suppose for some prime $p$ that $pG\ne G$. Then for each $n$, $D_{p,n}$ is cofinal in $G$.
\end{lem}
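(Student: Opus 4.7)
My plan is to give a direct, elementary group-theoretic argument; notably, convex orderability itself will not enter the proof (beyond its earlier use in establishing that $\fG$ is abelian, hence torsion-free as an ordered abelian group).

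I first show $D_{p,n}$ is nonempty. Using the hypothesis $pG \neq G$, I pick $z \in G \setminus pG$. Since $pG$ is a subgroup (closed under negation) and $G$ is torsion-free, I may replace $z$ by $-z$ if needed and assume $z > 0$. Setting $x_0 := p^n z$, divisibility by $p^n$ is immediate, and if $p^{n+1} w = p^n z$ for some $w \in G$ then torsion-freeness forces $pw = z$, contradicting $z \notin pG$. Hence $x_0 \in D_{p,n}$.

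To establish cofinality, I fix an arbitrary $c \in G$ and produce $x \in D_{p,n}$ with $x > c$. If $c \leq 0$, then $x_0$ itself suffices. If $c > 0$, I set $x := x_0 + p^{n+1} c$. Divisibility by $p^n$ is immediate; and if $p^{n+1} \mid x$, then since $p^{n+1} \mid p^{n+1} c$ I would get $p^{n+1} \mid x_0$, contradicting the previous paragraph. Finally, $p^{n+1} \geq 2$ and $c > 0$ give $p^{n+1} c \geq 2c > c$, so $x > x_0 + c > c$. The driving observation is just that translation by any positive element of $p^{n+1} G$ preserves membership in $D_{p,n}$ while pushing the element arbitrarily far to the right, reducing cofinality of $D_{p,n}$ to the trivial cofinality of $p^{n+1} G_{>0}$ in $G$. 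I therefore do not anticipate any significant obstacle.
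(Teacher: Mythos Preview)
Your proof is correct and follows essentially the same elementary route as the paper: both arguments pick a positive element $z\notin pG$ and use that $p^n z\in D_{p,n}$ by torsion-freeness. The only cosmetic difference is that the paper, given a target $a>0$, first adjusts $a$ to some $b\ge a$ with $p\nmid b$ and takes $x=p^n b$, whereas you fix one $x_0\in D_{p,n}$ and then translate by $p^{n+1}c$ to exceed the target; neither variant offers a real advantage over the other.
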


\begin{proof}
Since $pG\ne G$, there is some $c>0$ with $p\nmid c$. Consider $0<a\in G$. We show that there is $x\ge a$ such that $x\in D_{p,n}$. First, if $p\nmid a$, let $b=a$; if $p\mid a$, set $b=a+c$. So, $b\ge a$ and $p\nmid b$. Now $x=p^n\cdot b\ge a$ and $x\in D_{p,n}$.
\end{proof}

Combining this with Lemma \ref{Thm_CofinalDisjointSets}, we can now easily establish Theorem \ref{Thm_VCMiniffDivisible}.

\begin{cor}\label{VC-DOAG}
 If $\fG$ is convexly orderable, then $\fG$ is divisible.
\end{cor}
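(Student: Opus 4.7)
The plan is to prove the contrapositive: assume $\fG$ is not divisible and deduce that $\fG$ cannot be convexly orderable, which contradicts our standing assumption.

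First, I would reduce to a single prime. Since $\fG$ is abelian, non-divisibility means $nG\ne G$ for some $n\ge 2$. Factoring $n$ into primes and using $(pq)G = p(qG)$, at least one prime factor $p$ of $n$ must satisfy $pG\ne G$; otherwise, an inductive argument on the number of prime factors would give $nG=G$. So we are in the situation covered by Lemma \ref{Lem_Cofinal}.

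Next, I would consider the family $\{D_{p,n} : n\ge 1\}$. Each $D_{p,n}$ is $\emptyset$-definable in the language of ordered groups by the formula $x>0 \wedge (p^n\mid x) \wedge \neg(p^{n+1}\mid x)$. The crucial structural observations are: (i) these sets are pairwise disjoint, because membership in $D_{p,n}$ determines the exact $p$-divisibility level of $x$ (if $m>n$ and $p^m\mid x$ then $p^{n+1}\mid x$, and if $m<n$ then $p^{m+1}\mid p^n\mid x$, so $x\notin D_{p,m}$ in either case); and (ii) by Lemma \ref{Lem_Cofinal}, each $D_{p,n}$ is $\le$-cofinal in $G$.

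Finally, I would invoke Lemma \ref{Thm_CofinalDisjointSets} applied to the linearly ordered structure $(G; \le, +)$, which rules out precisely such an infinite family of pairwise disjoint definable coterminal sets. This yields the required contradiction. No serious obstacle is anticipated --- the real work has been carried out in Lemmas \ref{Thm_CofinalDisjointSets} and \ref{Lem_Cofinal}, together with the passage to abelianness supplied by Proposition \ref{COdp} and Simon's result; the corollary is essentially an assembly of these ingredients.
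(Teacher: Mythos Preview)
Your proposal is correct and follows essentially the same approach as the paper: assume non-divisibility, pass to a prime $p$ with $pG\ne G$, observe that the $D_{p,n}$ are pairwise disjoint and cofinal, and invoke Lemma~\ref{Thm_CofinalDisjointSets}. You supply more detail than the paper (the reduction from general $n$ to a prime $p$, and the verification of disjointness), but the argument is the same.
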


\begin{proof}
 Suppose $\fG$ is convexly orderable but not divisible, say $pG \ne G$. For each $n$, $D_{p,n}$ is cofinal and pairwise disjoint in $\fG$.  Apply Lemma \ref{Thm_CofinalDisjointSets} to conclude.
\end{proof}

Although there were previously known examples of dp-minimal theories that are not VC-minimal (e.g., see \cite{dgl}), this gives us a natural example of such a theory (discovered independently in \cite{acfm}).

\begin{expl}\label{Cor_StuffnotVCMin}
 The theory of Presburger arithmetic, $T = \Th(\mathbb{Z}; +, \le)$, is not VC-minimal and not convexly orderable.  On the other hand, it is quasi-VC-minimal, and hence also dp-minimal.
\end{expl}

This has interesting consequences for ordered fields.

\begin{prop}\label{Prop_OrderedFieldVCMin}
 Suppose $\fF = (F; +, \cdot, \le)$ is an ordered field.  If $\fF$ is convexly orderable, then every positive element has an $n^\text{th}$ root for all $n \ge 1$.
\end{prop}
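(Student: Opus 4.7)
The plan is to apply Theorem \ref{Thm_VCMiniffDivisible} to the multiplicative ordered group $\fG = (F_{>0}; \cdot, \le)$ of strictly positive elements of $\fF$. Since $\fF$ contains $\mathbb{Q}$, $\fG$ is an infinite ordered abelian group, and divisibility of $\fG$, written multiplicatively, says exactly that for each $a > 0$ and each $n \ge 1$ there is $b > 0$ with $b^n = a$. So the task reduces to showing that $\fG$ is convexly orderable and invoking the implication (4) $\Rightarrow$ (1) of Theorem \ref{Thm_VCMiniffDivisible}.

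To establish convex orderability of $\fG$, I would fix a linear order $\unlhd$ on $F$ witnessing convex orderability of $\fF$, and claim that its restriction $\unlhd\restriction F_{>0}$ witnesses convex orderability of $\fG$. Given any formula $\psi(x; \bar{y})$ in the language of $\fG$, its natural relativization to $F_{>0}$ (using that $F_{>0}$, $\cdot$, and $\le$ are all $\emptyset$-definable in $\fF$) produces a formula $\phi(x; \bar{y})$ in the language of $\fF$ which implies $x > 0$ and satisfies $\psi(\fG; \bar{b}) = \phi(\fF; \bar{b})$ for every $\bar{b} \in F_{>0}^{|\bar{y}|}$. Convex orderability of $\fF$ applied to $\phi$ yields $k < \omega$ such that each $\phi(\fF; \bar{b})$ is a union of at most $k$ $\unlhd$-convex subsets of $F$; since $\phi(\fF; \bar{b}) \subseteq F_{>0}$, it is a fortiori a union of at most $k$ convex subsets of $F_{>0}$ with respect to $\unlhd\restriction F_{>0}$, as required.

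With convex orderability of $\fG$ in hand, Theorem \ref{Thm_VCMiniffDivisible} gives divisibility of $\fG$, which is the desired conclusion. The only care required is in the translation of the previous paragraph, moving between formulas of $\fF$ and formulas of the induced structure on the definable subset $F_{>0}$; once that bookkeeping is carried out, there is no deeper obstacle, as the heavy lifting is already contained in Theorem \ref{Thm_VCMiniffDivisible}.
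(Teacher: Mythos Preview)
Your argument is correct and follows exactly the same approach as the paper's proof: restrict the convex ordering $\unlhd$ from $\fF$ to the multiplicative ordered group $(F_{>0};\cdot,\le)$ and apply Theorem~\ref{Thm_VCMiniffDivisible}. The paper's proof is simply terser, asserting in one sentence that ``$\unlhd$ induces a convex ordering on the ordered group $(F_+;\cdot,\le)$,'' whereas you have spelled out the relativization-of-formulas bookkeeping that justifies this; nothing is different in substance.
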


\begin{proof}
 Suppose $\fF$ is convexly ordered by $\unlhd$.  Then, $\unlhd$ induces a convex ordering on the ordered group $(F_+; \cdot, \le)$ where $F_+ = \set{a \in F}{a > 0}$. Thus, by Theorem \ref{Thm_VCMiniffDivisible}, $F_+$ is divisible.  In other words, for any $a \in F_+$ and $n \ge 1$, there exists $b \in F_+$ such that $b^n = a$.
\end{proof}

Theorem 5.3 of \cite{mms} states that any weakly o-minimal ordered field is real closed.  This suggests the following open question.

\begin{ques}\label{Ques_RealClosed}
 Is it the case that an ordered field $(F; +, \cdot, \le)$ is convexly orderable if and only if $(F; +, \cdot, \le)$ is real closed?
\end{ques}

Before we get carried away, however, not all ordered structures that are convexly orderable are weakly o-minimal. For example, consider $\mathbb{Q}$ and take $D \subseteq \mathbb{Q}$ dense and codense. One can verify that the structure $\fM = ( \mathbb{Q}; \le, D)$ has quantifier elimination, from which it easily follows that it is VC-minimal. For instance, take as a generating family
\[
 \Psi = \{ (D(x) \wedge x < y), (\neg D(x) \wedge x < y), D(x), x = y \}.
\]
So $\fM$ is convexly orderable, but on the other hand, $\fM$ is clearly not weakly o-minimal.  The issue is that Lemma \ref{Thm_CofinalDisjointSets} necessitates \emph{infinitely many} coterminal disjoint sets to contradict convex orderability.  This leads to another open question.

\begin{ques}\label{Ques_QuasiWeaklyOMin}
 If $\fM = (M; \le, ...)$ is a linearly ordered structure that is convexly orderable, then is $\fM$ quasi-weakly o-minimal?
\end{ques}

\section{Valued fields}\label{valfieldsect}

\subsection{Simple interpretability}

In this subsection we exhibit a means of passing convex orderability from a structure to a simple interpretation in the structure. If $\fM$ and $\fN$ are models (not necessarily in the same language) and $A \subseteq M$, then $\fM$ \emph{interprets $\fN$ over $A$} if there are $n\ge 1$, an $A$-definable subset $S \subseteq M^n$, and an $A$-definable equivalence relation $\ep$ on $S$ such that
\begin{itemize}
\item
the elements of $\fN$ are in bijection with the $\ep$-equivalence classes of $S$, and
\item
the relations on $S$ induced by the relations and functions of $\fN$ via this bijection are $A$-definable in $\fM.$
\end{itemize}
Moreover, if $n=1$ in the above definition, we say that $\fM$ \emph{simply interprets} $\fN$.

It is generally most convenient to identify the elements of $\fN$ with the equivalence classes of $S$, so that for instance we will write $\bar{a} \in x$ if $\bar{a} \in S$ and $x \in N$ corresponds to the $\ep$-equivalence class containing $\bar{a}$.

\begin{rem}
\label{r21}
Using the same notation as above, suppose $\phi(\bar{x};\bar{y})$ is a formula in the language of $\fN$ with $k=|\bar{x}|$. Then there is $\tilde{\phi}(\bar{z};\bar{w})$ in the language of $\fM$ (with parameters from $A$) with the property that, for any set $X\subseteq N^k$ defined by an instance $\phi(\bar{x};\bar{a})$ of $\phi$, the set
\[
\tilde{X}=\bigcup X\subseteq S^k.
\]
is defined by an instance $\tilde{\phi}(\bar{z};\bar{b})$ of $\tilde{\phi}$.
To see this, induct on the complexity of $\phi$, replacing function and relation symbols from $\fN$ with their corresponding definitions in $\fM$ and $=$ with $\ep$, and relativizing all quantifiers to $S$.
\end{rem}

\begin{lem}
\label{l22}
If $\fM$ simply interprets $\fN$ and $\fM$ is convexly orderable, then $\fN$ is also convexly orderable.
\end{lem}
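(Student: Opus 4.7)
The plan is to lift a witnessing convex order $\unlhd$ on $M$ to a linear order $\unlhd_N$ on $N$, by selecting for each $\ep$-class a canonical $\unlhd$-leftmost convex component. Since the interpretation is simple, $S \subseteq M$, and each $\ep$-class $[a]$ is a parameter-definable subset of $M$ (defined by $\ep(x;a)$). Applying convex orderability of $\fM$ to the single formula $\ep(x;y)$ yields a uniform $k_\ep < \omega$ such that every $\ep$-class decomposes into at most $k_\ep$ $\unlhd$-convex pieces; in particular, each $\ep$-class has a well-defined $\unlhd$-leftmost convex component $C_{[a]}$.

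I then define $\unlhd_N$ on $N$ by declaring $[a] \unlhd_N [b]$ iff $[a] = [b]$ or every element of $C_{[a]}$ is $\lhd$ every element of $C_{[b]}$. This is a linear order: totality on distinct classes uses the elementary fact that any two disjoint $\unlhd$-convex subsets of $M$ are $\unlhd$-comparable (applied to the disjoint sets $C_{[a]}, C_{[b]}$), and transitivity is inherited pointwise from $\unlhd$. To check that $\unlhd_N$ witnesses convex orderability of $\fN$, I fix a formula $\phi(x;\bar y)$ of $\fN$ and invoke Remark \ref{r21} to obtain $\tilde\phi(z;\bar w)$ (possibly with parameters from $A$) such that $\tilde X := \bigcup \phi(\fN;\bar a) \subseteq S$ is an instance of $\tilde\phi$. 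Convex orderability of $\fM$ then supplies a uniform bound $k$ and a decomposition $\tilde X = E_1 \cup \cdots \cup E_k$ into $\unlhd$-convex components. Since each $C_{[a]}$ with $[a] \in \phi(\fN;\bar a)$ is a $\unlhd$-convex subset of $\tilde X$, it lies in some single $E_j$, giving a partition $\phi(\fN;\bar a) = X_1 \sqcup \cdots \sqcup X_k$ with $X_j := \{[a] \in \phi(\fN;\bar a) : C_{[a]} \subseteq E_j\}$.

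The heart of the argument is showing each $X_j$ is $\unlhd_N$-convex: if $[a], [b] \in X_j$ and $[a] \unlhd_N [c] \unlhd_N [b]$ with $[c]$ distinct from both, then every element of $C_{[c]}$ sits strictly $\unlhd$-between some element of $C_{[a]}$ and some element of $C_{[b]}$, so convexity of $E_j$ in $\unlhd$ forces $C_{[c]} \subseteq E_j \subseteq \tilde X = \bigcup \phi(\fN;\bar a)$. Because the $\ep$-classes appearing in $\phi(\fN;\bar a)$ are pairwise disjoint and cover $\tilde X$, the intermediate class $[c]$ must itself lie in $\phi(\fN;\bar a)$, and hence in $X_j$. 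This last partition/disjointness step is really the only delicate point: the construction of $\unlhd_N$ and its order properties are routine once the leftmost-component idea is in place, but convexity of $X_j$ crucially requires that no $\ep$-class outside $\phi(\fN;\bar a)$ can contribute elements to $\tilde X$, forcing any intermediate leftmost-component that lands inside $\tilde X$ to come from $\phi(\fN;\bar a)$ itself.
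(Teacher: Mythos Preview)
Your proof is correct and follows essentially the same strategy as the paper: both lift $\unlhd$ to $N$ by comparing how far left the $\ep$-classes reach. The paper defines $x \unlhd_N y$ by the condition $(\forall s\in y)(\exists r\in x)\,r\unlhd_M s$, which one checks agrees with your leftmost-component order on distinct classes. Your explicit choice of leftmost convex components makes the verification that $\unlhd_N$ is a linear order more transparent (the paper has to work a little for antisymmetry), whereas the paper obtains the uniform bound on $\unlhd_N$-components by an alternation-and-lift contradiction rather than your direct partition into the sets $X_j$. These are differences of presentation rather than of substance.
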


\begin{proof}
Let $\ep(x,y)$ define an equivalence relation on $S \subseteq M$ as in the definition of interpretation (possibly over parameters), and suppose that $\fM$ is convexly ordered by $\unlhd_M$. Define on $\fN$ the relation $\unlhd_N$ by
\[
x\unlhd_N y \Longleftrightarrow (\forall s\in y)(\exists r\in x)[r\unlhd_M s].
\]
We claim that $\fN$ is convexly ordered by $\unlhd_N$.

First note that $\unlhd_N$ linearly orders $N$. Transitivity and linearity are clear. For antisymmetry, suppose that $x\unlhd_N y$ and $y\unlhd_N x$. Then, beginning with an arbitrary $s_0\in y$, find $r_i\in x$, $s_i\in y$ such that for every $i<\omega$, $r_i\unlhd_M s_i$ and $s_{i+1}\unlhd_M r_i$. But since $x$ is a definable subset of $\fM$, $x$ must be a finite union of $\unlhd_M$-convex sets. So we must have $s_i\in x$ for some $i$, whence $x=y$. A similar argument shows that $x\lhd_N y$ iff there is an $r\in x$ such that $r\lhd_M s$ for all $s\in y$.

Now consider a formula $\phi(x;\bar{y})$ in the language of $\fN$, $\bar{a}$ a tuple from $N$, and $X\subseteq N$ the set defined by $\phi(x;\bar{a})$. For $\tilde{\phi}(x;\bar{b})$ defining $\tilde{X}$ as in Remark \ref{r21}, since $\unlhd_M$ convexly orders $\fM$, there is a uniform bound $k$ on the number of $\unlhd_M$-convex sets comprising an instance of $\tilde{\phi}$ in $\fM$. It will suffice to show that $X$ is also a union of at most $k$ $\unlhd_N$-convex sets in $N$.

Suppose not, so that there are 
\[c_0\unlhd_N c_1\unlhd_N\ldots\unlhd_N c_{2k}\]
such that $c_i\in X$ iff $i$ is even. For each $i<2k$, since $c_i\neq c_{i+1}$ there is $\tilde{c}_i\in c_i$ such that $\tilde{c}_i\lhd_M d$ for all $d\in c_{i+1}$. Take also any element $\tilde{c}_{2k}\in c_{2k}$. Now
\[\tilde{c}_0\unlhd_M \tilde{c}_1\unlhd_M\ldots\unlhd_M \tilde{c}_{2k}\]
and $\tilde{c}_i\in\tilde{X}$ iff $i$ is even. This contradicts the fact that $\tilde{X}$ is a union of $k$ (or fewer) $\unlhd_M$-convex sets.

We conclude that in $\fN$, every instance of $\phi$ defines a union of $k$ or fewer $\unlhd_N$-convex sets. Since any formula in the language of $\fN$ admits such a uniform bound, $\unlhd_N$ convexly orders $\fN$.
\end{proof}

Lemma \ref{l22} allows us to show that a theory is not convexly orderable (hence not VC-minimal) by simply interpreting a structure that is not convexly orderable.  We can apply this to theories of valued fields.  Let $K$ be a valued field with value group $\Gamma$, residue field $k$, and valuation $v: K \rightarrow \Gamma \cup \{ \infty \}$, and let $T = \Th(K; +, \cdot, |)$.  Here $x | y$ means $v(x) \le v(y)$.  Though we work in the one-sorted language $\cL = \{ +, \cdot, | \}$, the statements could be adapted to other languages of valued fields.

\begin{cor}
 \label{c23}
 If $T$ is convexly orderable, then both the value group $\Gamma$ and the residue field $k$ are convexly orderable.
\end{cor}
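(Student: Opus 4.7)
The plan is to apply Lemma \ref{l22} directly, which reduces matters to showing that the one-sorted structure $(K; +, \cdot, \mid)$ simply interprets both $\Gamma$ and $k$. In each case the key is to exhibit an interpretation whose domain $S$ sits inside $K^1$ (i.e., $n=1$), so that the simple interpretation hypothesis of Lemma \ref{l22} is met.

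For the value group, take $S = K \setminus \{0\}$, which is $\emptyset$-definable by $\neg(x=0)$, and the equivalence relation $\ep(x,y) \equiv (x \mid y) \wedge (y \mid x)$; its classes correspond bijectively with $\Gamma$ via $x \mapsto v(x)$. The group operation on $\Gamma$ is induced by multiplication on $K^*$ (since $v(xy) = v(x) + v(y)$), and the linear order $\le$ on $\Gamma$ is induced by $\mid$ itself. All of this is definable without parameters in $\cL$.

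For the residue field, take $S = \cO = \set{x \in K}{1 \mid x}$, the valuation ring, with equivalence relation $\ep(x,y) \equiv \neg((x-y) \mid 1)$. Note that $(x-y)\mid 1$ says $v(x-y) \le 0$, so its negation says $v(x-y) > 0$, i.e., $x-y \in \mathfrak{m}$; this correctly captures both the case $x \ne y$ and the case $x=y$ (since $v(0)=\infty>0$). The classes correspond bijectively with $k$ via the residue map, and the field operations on $k$ are induced by $+$ and $\cdot$ on $\cO$, all of which are $\emptyset$-definable in $\cL$.

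Having displayed these simple interpretations, Lemma \ref{l22} applies verbatim and yields convex orderability of $\Gamma$ and of $k$. There is no serious obstacle here: once the correct 1-dimensional domains are chosen, the verifications are routine. The only mild subtlety is confirming that the domain in each case is genuinely a subset of $K$ (not $K^n$ for some $n > 1$), which is why we work with $K^*$ rather than with pairs representing value group elements and with $\cO$ rather than with quotient pairs --- this keeps us within the scope of simple interpretability required by Lemma \ref{l22}.
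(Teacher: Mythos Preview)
Your proof is correct and follows essentially the same approach as the paper: both argue by exhibiting simple (one-dimensional) interpretations of $\Gamma$ and $k$ in $K$ and then invoking Lemma~\ref{l22}. The paper spells out only the interpretation of $\Gamma$ (identical to yours) and leaves the residue field implicit; your additional paragraph giving the explicit interpretation of $k$ on $\cO$ modulo $\mathfrak{m}$ is a welcome elaboration rather than a departure.
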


\begin{proof}
 Both $\Gamma$ and $k$ are simply interpretable (over $\emptyset$) in $K$. For example, $\Gamma$ is interpreted on $S=K\setminus\left\{0\right\}$ via $\ep(x,y)\equiv x\mid y\wedge y\mid x$ (i.e., $v(x)=v(y)$). Since $v(xy)=v(x)+v(y)$, the addition in $\Gamma$ is interpreted by multiplication in $K$, and the ordering is explicitly given by $\mid$.  We use Lemma \ref{l22} to conclude.
\end{proof}

We know that the theory of algebraically closed valued fields is convexly orderable.  Also, the theory of real closed valued fields is weakly o-minimal \cite{mad}, hence also convexly orderable.  This leads to an interesting open question: Under which circumstances does the converse of Corollary \ref{c23} hold?

\begin{ques}\label{Ques_ValuedFields}
 Is it true that, for any Henselian valued field $K$ with value group $\Gamma$ and residue field $k$, $K$ is convexly orderable if and only if $\Gamma$ and $k$ are convexly orderable?
\end{ques}

We understand when $\Gamma$ is convexly orderable by Theorem \ref{Thm_VCMiniffDivisible}, but we do not currently have a characterization for when $k$ is convexly orderable.  Answering Open Question \ref{Ques_ValuedFields} would probably require first understanding when a field is convexly orderable in general.

We can apply Corollary \ref{c23} to the case of the $p$-adics.

\begin{cor}\label{Cor_NonDivisibleVCMin}
 If $\Gamma$ is not divisible, then $T$ is not convexly orderable, hence not VC-minimal.  In particular, the theory of the $p$-adics is not VC-minimal.
\end{cor}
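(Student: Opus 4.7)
The plan is to chain together the results already established in the paper. First, I would observe that the corollary is essentially the contrapositive of an implication obtained by composing Corollary \ref{c23} with Corollary \ref{VC-DOAG}. Specifically, if $T$ were convexly orderable, then by Corollary \ref{c23} the value group $\Gamma$ (viewed as an ordered abelian group interpreted on $K \setminus \{0\}$ modulo equal valuation) would itself be convexly orderable. But Corollary \ref{VC-DOAG} asserts that any convexly orderable ordered group is divisible, contradicting the assumption that $\Gamma$ is not divisible. Hence $T$ cannot be convexly orderable, and by Corollary \ref{Cor_VCMinCO}, $T$ cannot be VC-minimal either.

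For the special case of the $p$-adics, I would note that $\Th(\mathbb{Q}_p; +, \cdot, |)$ has value group $(\mathbb{Z}; +, \le)$, which is manifestly not divisible (no element is divisible by $p$ aside from $0$, for instance). Applying the first part of the corollary then immediately yields that the theory of the $p$-adics is not VC-minimal.

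There is essentially no technical obstacle here—the content of the result lies entirely in the earlier lemmas and corollaries, and the present statement is a clean synthesis. The only mild care needed is to confirm that the ordered group structure on $\Gamma$ produced by the simple interpretation in Corollary \ref{c23} is exactly the standard value group (with the ordering induced by $|$), so that Corollary \ref{VC-DOAG} applies to it. This is immediate from the interpretation described in the proof of Corollary \ref{c23}.
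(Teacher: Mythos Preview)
Your proof is correct and follows essentially the same approach as the paper: combine Corollary~\ref{c23} with the equivalence of convex orderability and divisibility for ordered groups (the paper cites Theorem~\ref{Thm_VCMiniffDivisible} rather than Corollary~\ref{VC-DOAG}, but the content is the same), then specialize to $\mathbb{Q}_p$ via its value group $\mathbb{Z}$. Your explicit invocation of Corollary~\ref{Cor_VCMinCO} to pass from ``not convexly orderable'' to ``not VC-minimal'' is a minor clarification the paper leaves implicit.
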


\begin{proof}
 By Theorem \ref{Thm_VCMiniffDivisible}, $\Gamma$ is convexly orderable if and only if $\Gamma$ is divisible.  Hence, if $\Gamma$ is not divisible, then Corollary \ref{c23} implies that $T$ is not convexly orderable.  In particular, the theory of the $p$-adics, $\Th(\mathbb{Q}_p; +, \cdot, |)$, has value group $(\mathbb{Z}; +, \le)$, which is not divisible.  Hence, the theory of the $p$-adics is not VC-minimal.
\end{proof}

By Section 6 of \cite{dgl}, the theory of the $p$-adics is dp-minimal.  So this corollary gives us another natural example of a theory that is dp-minimal but not VC-minimal.  In the next subsection, we exhibit a means of producing examples of theories that are dp-minimal but not quasi-VC-minimal.

\subsection{Quasi-VC-minimality}

For this subsection, fix $K$ a valued field with value group $\Gamma$ and let $T = \Th(K; +, \cdot, |)$ as in the previous subsection. First, recall that if $K$ is algebraically closed, then $T$ is VC-minimal. Notice that if $K$ is algebraically closed, then $\Gamma$ is divisible.  The main goal of this section is to prove the following stronger result.

\begin{thm}\label{thm_VFNotQVCMin}
 If $T$ is quasi-VC-minimal, then $\Gamma$ is divisible.
\end{thm}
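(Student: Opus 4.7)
I argue by contradiction: assume $T$ is quasi-VC-minimal but $\Gamma$ is not divisible, so that $p\Gamma\neq\Gamma$ for some prime $p$. The strategy parallels Corollary \ref{Cor_NonDivisibleVCMin}, but substitutes the local convex orderability of Corollary \ref{Cor_QuasiVCMinOrder} for full convex orderability; the price is that the witnessing linear order depends on the formula, so all of the data the argument needs must be packaged into a single $K$-formula.

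By Lemma \ref{Lem_Cofinal}, for each $n \ge 0$ the set $D_{p,n}\subseteq\Gamma$ is cofinal in $\Gamma$, and the $D_{p,n}$ are pairwise disjoint; their valuation-preimages $E_n := v^{-1}(D_{p,n})\subseteq K$ are thus pairwise disjoint, valuation-cofinal, and $\emptyset$-definable by formulas $\psi_n(x)$ in the language of $K$ (the relation $p^n\mid v(x)$ being expressed as $\ex{z}{z^{p^n}\mid x \wedge x\mid z^{p^n}}$). To extract many disjoint cofinal instances from a single parameterized formula, I exploit $K$'s multiplication: for fixed $n$, the formula $\phi_n(x;y) \equiv \ex{w}{x = yw \wedge \psi_n(w)}$ defines the multiplicative translate $y\cdot E_n = \{x : v(x)-v(y)\in D_{p,n}\}$, and a direct calculation gives $D_{p,n}-D_{p,n}\subseteq p^n\Gamma$, so translates by parameters in distinct cosets of $p^n\Gamma$ are pairwise disjoint. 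This furnishes at least $|\Gamma/p^n\Gamma|\ge p^n$ pairwise disjoint valuation-cofinal instances of $\phi_n$ as $y$ ranges over coset representatives.

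Combine $\phi_n$ with the ray formula $\theta(x;a)\equiv v(x)>v(a)$ into a single $K$-formula $\Phi_n$ (using a selector parameter) and apply Corollary \ref{Cor_QuasiVCMinOrder}: one obtains a linear order $\unlhd$ on $K$ and a bound $k_n<\omega$ such that every instance of $\Phi_n$ — each ray $R_a := \theta(K;a)$ and each translate $y\cdot E_n$ — is a union of at most $k_n$ $\unlhd$-convex subsets. By pigeonhole each valuation-cofinal translate contains a valuation-cofinal $\unlhd$-convex sub-piece $C_j$; given $2k_n+1$ such pieces arranged in $\unlhd$-order, the interleaving argument of Lemma \ref{Thm_CofinalDisjointSets} — pick $b_j\in C_{i_j}$, then $a\in K$ with $v(a)>\max_j v(b_j)$, then $c_j\in C_{i_j}$ with $v(c_j)>v(a)$ — forces $R_a$ to have more than $k_n$ $\unlhd$-convex components, the desired contradiction.

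The principal obstacle is that $k_n$ depends on $\Phi_n$ and so can grow with $n$, yet one needs $p^n>2k_n+1$. Tracking the bound $k_n = k_0|\Theta|+1$ from the proof of Corollary \ref{Cor_QuasiVCMinOrder}, $k_0$ stays bounded (it is controlled by the number of directed instances in the boolean decomposition of $\Phi_n$) while $|\Theta|$ is driven by the parameter-free mod-$p^n$ data and grows only polynomially in $n$. Since $p^n$ grows exponentially, choosing $n$ large enough delivers $p^n>2k_n+1$ and closes the contradiction.
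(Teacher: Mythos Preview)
Your approach has a genuine gap in the final paragraph. The crux of the argument uses a different formula $\Phi_n$ for each $n$, and Corollary~\ref{Cor_QuasiVCMinOrder} then yields a bound $k_n$ depending on $\Phi_n$. You assert that $k_n$ grows only polynomially in $n$, but this is unjustified: the $k_0$ and $\Theta$ in the proof of Corollary~\ref{Cor_QuasiVCMinOrder} are extracted by compactness, with no control whatsoever as the input formula varies. Nothing in the definition of quasi-VC-minimality bounds the size of the directed/parameter-free decomposition of $\Phi_n$ in terms of its syntax, and your $\psi_n$ already involves $p^n$-th powers, so even its syntactic length grows with $n$. The claims that ``$k_0$ stays bounded'' and that $|\Theta|$ grows polynomially have no basis; as far as the argument shows, $k_n$ could outpace $p^n$.

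The paper sidesteps this by working with a \emph{single} formula $\phi(x;y)\equiv\exists z\,(z^p\mid(x-y))$, so that Corollary~\ref{Cor_QuasiVCMinOrder} gives one order $\unlhd$ and one bound $k$. The burden then shifts to producing, for each $n$ and for \emph{any} linear order $\unlhd$, a parameter $b$ such that $\phi(K;b)$ needs more than $n$ $\unlhd$-convex pieces (Lemma~\ref{Lem_NotFinitelyMany}). This is done via the additive ultrametric structure: choose $0=\gamma_0<\gamma_1<\cdots$ in $\Gamma$ with $p\mid\gamma_i$ iff $i$ is even, and build a finite ``binary tree'' $\mathcal{A}_{2n+1}\subseteq K$ of size $2^{2n+1}$ by, at stage $i$, splitting each $a$ into $a$ and some $a'$ with $v(a-a')=\gamma_i$. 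A greedy selection then finds a $\unlhd$-increasing chain $a_0\lhd\cdots\lhd a_{2n+1}$ in $\mathcal{A}_{2n+1}$ with $v(a_j-a_i)=\gamma_j$ for $j<i$; setting $b=a_{2n+1}$ gives $a_i\in X_b$ iff $i$ is even. The key idea you are missing is to let the \emph{parameter}, not the formula, carry the growing complexity, using balls $v(x-b)$ rather than multiplicative translates.
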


Suppose then that $\Gamma$ is not divisible, say $p\Gamma\ne\Gamma$.  Fix some positive $\gamma_1 \in \Gamma\setminus p\Gamma$.  Define $\gamma_n \in \Gamma$ by
\[
 \gamma_n =
 \begin{cases}
  k \cdot p \cdot \gamma_1 & \text{if } n = 2k, \\
  \gamma_1 + k \cdot p \cdot \gamma_1 & \text{if } n = 2k+1.
 \end{cases}
\]
Notice that $0 = \gamma_0 < \gamma_1 < ... < \gamma_n < ...$ and $p\mid\gamma_n$ if and only if $n$ is even.

We now construct, for each $n < \omega$, $\mathcal{A}_n \subseteq K$ as follows.  Set $\mathcal{A}_0 = \{ 0 \}$.  For each $a \in \mathcal{A}_n$, choose $a' \in K$ such that $v(a - a') = \gamma_n$.  Let
\[
 \mathcal{A}_{n+1} = \mathcal{A}_n \cup \set{a'}{a \in \mathcal{A}_n}.
\]
Note that $a'\notin \mathcal{A}_n$ (to see this, show inductively that for distinct $b_1,b_2\in\mathcal{A}_n$, $v(b_1-b_2)\le\gamma_{n-1}$). Therefore $| \mathcal{A}_n | = 2^n$.  Moreover, for all $a \in \mathcal{A}_n$ and all $i < n$, there exists $b \in \mathcal{A}_n$ such that $v(a - b) = \gamma_i$.

Suppose that $\unlhd$ is a linear ordering on $K$.  In this case, each $\mathcal{A}_n$ is also linearly ordered by $\unlhd$.  For each $b \in K$, define
\[
 X_b = \set{a \in K}{p \mid v(a - b)}.
\]

\begin{lem}\label{Lem_NotFinitelyMany}
 For each $n < \omega$, there exists $b \in K$ such that $X_b$ is the union of no fewer than $n+1$ $\unlhd$-convex subsets of $K$.
\end{lem}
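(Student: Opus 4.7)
The plan is to find, for each $n$, an element $b \in K$ and a $\unlhd$-monotone sequence $c_0, c_1, \ldots, c_{2n} \in K$ with $c_i \in X_b$ if and only if $i$ is even. Monotonicity in either direction (increasing or decreasing) suffices: in both cases the alternating $X_b$-memberships along $\unlhd$ exhibit $X_b$ as a union of at least $n+1$ $\unlhd$-convex components.

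I would take the $c_i$'s from $\mathcal{A}_N$ for $N$ sufficiently large, using labels of the shape $\sigma^{(i)} = (\underbrace{1,\ldots,1}_i, 0, \ast, \ldots, \ast) \in \{0,1\}^N$: $i$ leading 1's, a 0 at position $i$, and arbitrary free bits thereafter. Any two such labels with $i < j$ first disagree at position $i$, so the pairwise valuations $v(c_i - c_j) = \gamma_{\min(i,j)}$ are automatically correct. Since there are $2^{N-i-1}$ admissible labels for $c_i$, for $N$ taken large enough a pigeonhole / Erd\H{o}s--Szekeres-style argument on the restriction of $\unlhd$ to these admissible classes produces a $\unlhd$-monotone selection with one representative from each class.

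Working in $K$ (assumed sufficiently saturated, justified because quasi-VC-minimality is a theory-level property, so we may pass to a saturated elementary extension of any model of $T$ before applying Corollary \ref{Cor_QuasiVCMinOrder}), I would then realize $b$ satisfying $v(b - c_i) = \gamma_i$ for every $i \le 2n$. The ultrametric consistency of this type is immediate: for $i < j$ we have $\min(\gamma_i,\gamma_j) = \gamma_i = v(c_i - c_j)$. Given such a $b$, the conclusion is that $c_i \in X_b$ iff $p \mid \gamma_i$ iff $i$ is even, by the construction of the $\gamma_n$'s.

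The main obstacle is the combinatorial extraction of a $\unlhd$-monotone selection across the $2n+1$ admissible label classes. Since $\unlhd$ is adversarial and may interleave the classes in complex ways, coordinating a monotone choice is delicate; however, the exponential abundance $2^{N-i-1}$ of admissible labels at each level should drive an iterated pigeonhole (or a standard Erd\H{o}s--Szekeres partitioning into an increasing and a decreasing chain) to yield the desired sequence for $N$ large enough in terms of $n$. Quantifying the precise bound on $N$ is the technical heart of the proof.
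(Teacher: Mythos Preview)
Your proposal has a genuine gap at the monotone-transversal step. You fix the label classes $C_i = \{1^i 0 \ast\} \subseteq \mathcal{A}_N$ in advance and hope to extract $c_i \in C_i$ with $(c_0, \ldots, c_{2n})$ $\unlhd$-monotone via pigeonhole or Erd\H{o}s--Szekeres. But no such argument can succeed: since $\unlhd$ is an \emph{arbitrary} linear order on $K$, nothing prevents it from placing all of $C_1$ strictly $\unlhd$-below all of $C_0 \cup C_2$. In that situation every transversal satisfies $c_1 \lhd c_0$ and $c_1 \lhd c_2$, so $(c_0, c_1, c_2)$ is never monotone in the required index order. This obstruction is independent of $|C_i|$, so taking $N$ large does not help; the ``exponential abundance'' you cite buys nothing against an adversarial $\unlhd$ that simply separates the classes in the wrong order.

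The paper circumvents this by choosing the sequence \emph{dynamically} rather than from pre-specified classes. Working already in $\mathcal{A}_{2n+1}$, it takes $a_0$ to be the $\unlhd$-minimum of $\mathcal{A}_{2n+1}$, and then $a_{i+1}$ to be the $\unlhd$-minimum of $\{x \in \mathcal{A}_{2n+1} : v(x - a_i) = \gamma_i\}$. The maintained invariant is that $a_i$ is $\unlhd$-minimal in the entire ball $\{x \in \mathcal{A}_{2n+1} : v(x - a_i) \ge \gamma_i\}$; since $a_{i+1}$ lies in that ball, $a_i \lhd a_{i+1}$ is automatic. One then sets $b = a_{2n+1} \in \mathcal{A}_{2n+1}$, so no saturation is needed either. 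Your saturation step is in any case misplaced: the lemma is stated for a fixed $K$ together with a fixed, possibly non-definable, order $\unlhd$, which does not transfer to an elementary extension. If saturation were to be invoked at all, it would belong in the proof of Theorem~\ref{thm_VFNotQVCMin}, \emph{before} $\unlhd$ is produced via Corollary~\ref{Cor_QuasiVCMinOrder}---but as noted, it is unnecessary.
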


\begin{proof}
 Fix $n < \omega$ and let $\mathcal{A} = \mathcal{A}_{2n+1}$, which is a finite linear order (under $\unlhd$).
 
 Let $a_0 \in \mathcal{A}$ be the $\unlhd$-minimal element.  In general, we inductively construct a sequence $a_0, ..., a_{2n+1} \in \mathcal{A}$ such that
 \begin{enumerate}
  \item $v(a_j - a_i) = \gamma_j$ for all $j < i$,
  \item $a_0 \lhd a_1 \lhd ... \lhd a_{2n+1}$, and
  \item for all $a \in \mathcal{A}$ with $v(a - a_i) \ge \gamma_i$, $a_i \unlhd a$.
 \end{enumerate}
 Suppose that $a_0,\ldots,a_i$ with the above properties have been found, and choose $a_{i+1} \in \mathcal{A}$ $\unlhd$-minimal such that $v(a_{i+1} - a_i) = \gamma_i$.  This exists by definition of $\mathcal{A} = \mathcal{A}_{2n+1}$.  By condition (3), $a_i \lhd a_{i+1}$, so condition (2) holds up to $a_{i+1}$.  Condition (1) and $v(a_{i+1} - a_i) = \gamma_i>\gamma_j$ implies that $v(a_j - a_{i+1}) = \gamma_j$ for all $j < i$.  Therefore, condition (1) holds for $a_{i+1}$.  Finally, fix $a \in \mathcal{A}$ and suppose $v(a - a_{i+1}) \ge \gamma_{i+1}$.  Since $v(a_{i+1} - a_i) = \gamma_i$, we have $v(a - a_i) = \gamma_i$ as well.  However, since $a_{i+1}$ was chosen $\unlhd$-minimal in the set $\set{x \in \mathcal{A}}{v(x - a_i) = \gamma_i}$ and $a$ belongs to this set, we must have that $a_{i+1} \unlhd a$.  Thus, condition (3) holds for $a_{i+1}$.
  
 Finally, set $b = a_{2n+1}$.  Then, for $i \le 2n$, $a_i \in X_b$ if and only if $p \mid v(a_i - b)$ if and only if $p \mid \gamma_i$.  Recall, moreover, that $p \mid \gamma_i$ if and only if $i$ is even.  Therefore, $a_i \in X_b$ if and only if $i$ is even.  By condition (2), $X_b$ is the union of no fewer than $n+1$ $\unlhd$-convex subsets of $K$. 
\end{proof}

\begin{proof}[Proof of Theorem \ref{thm_VFNotQVCMin}]
 Suppose $\Gamma\ne p\Gamma$.  Fix the formula
 \[
  \phi(x;y) = \exists z ( z^p \mid (x - y) ).
 \]
 Towards a contradiction, suppose $T$ were quasi-VC-minimal.  By Corollary \ref{Cor_QuasiVCMinOrder}, there exists a linear order $\unlhd$ on $K$ and $n < \omega$ such that each instance of $\phi$ is a union of at most $n$ $\unlhd$-convex subsets of $K$.  By Lemma \ref{Lem_NotFinitelyMany}, there exists $b \in K$ such that $X_b = \phi(K; b)$ is a union of no fewer than $n+1$ $\unlhd$-convex subsets of $K$, a contradiction.
\end{proof}

\begin{cor}\label{Cor_pAdicsnotQVCMin}
 The following theories are not quasi-VC-minimal: $\Th(\mathbb{Q}_p; +, \cdot, |)$ for any prime $p$, and $\Th(k((t)); +, \cdot, |)$ for any field $k$.
\end{cor}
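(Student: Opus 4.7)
The plan is to invoke Theorem \ref{thm_VFNotQVCMin} and simply observe that the value group in each case fails to be divisible. Specifically, for any prime $p$, the field $\mathbb{Q}_p$ equipped with the $p$-adic valuation has value group isomorphic to $(\mathbb{Z}; +, \le)$, and likewise for any field $k$, the field $k((t))$ of formal Laurent series equipped with the $t$-adic valuation $v(\sum_{n \ge N} c_n t^n) = \min\set{n}{c_n \ne 0}$ has value group $(\mathbb{Z}; +, \le)$. Since $p \mathbb{Z} \ne \mathbb{Z}$ (indeed $\mathbb{Z}$ is not divisible by any prime), neither value group is divisible.

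Applying Theorem \ref{thm_VFNotQVCMin}, we conclude immediately that neither $\Th(\mathbb{Q}_p; +, \cdot, |)$ nor $\Th(k((t)); +, \cdot, |)$ is quasi-VC-minimal. There is essentially no obstacle here, since the theorem does the heavy lifting; the only thing to check is that $|$ really corresponds to the valuation induced by the standard choice in each case, which is immediate from the definition $x \mid y \Leftrightarrow v(x) \le v(y)$ that was fixed just before Corollary \ref{c23}.
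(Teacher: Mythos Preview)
Your proposal is correct and matches the paper's approach exactly: the paper states this corollary without proof, as it follows immediately from Theorem~\ref{thm_VFNotQVCMin} once one notes that both $\mathbb{Q}_p$ and $k((t))$ have value group $(\mathbb{Z};+,\le)$, which is not divisible.
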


Since the $p$-adics are dp-minimal, this gives us a natural example of a theory that is dp-minimal and not quasi-VC-minimal.  Combining this observation with Corollary \ref{Cor_StuffnotVCMin}, we get strict implications
\[
 \text{ VC-minimal } \Rightarrow \text{ quasi-VC-minimal } \Rightarrow \text{ dp-minimal }
\]
where strictness is witnessed by Presburger arithmetic and the $p$-adics respectively.

\section{Abelian Groups}\label{abgpssect}

Let $\fA = (A; +)$ be an abelian group and $T = \Th(\fA)$. Throughout this section we work exclusively in the pure group language $\cL=\left\{+\right\}$.  For each $k, m < \omega$, consider the formula
\[
 \phi_{k,m}(x) = \ex{y}{k \cdot y = m \cdot x}.
\]
Notice that $\phi_{k,m}(\fA)$ is a subgroup of $A$.  For $k = 0$, $\phi_{0,m}(\fA)$ is the subgroup of $m$-torsion elements of $A$, which we will also denote by $A[m]$.  For $m = 1$, $\phi_{k,1}(\fA)$ is the subgroup of $k$-multiples of $A$, which we will also denote by $kA$.  

\begin{prop}[Corollary 2.13 of \cite{prest}]\label{Thm_PPElim}
 All definable subsets of $A$ are boolean combinations of cosets of $\phi_{k,m}(\fA)$ for various $k, m < \omega$.
\end{prop}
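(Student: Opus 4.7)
The plan is to invoke the classical Baur--Monk--Szmielew theorem on pp-elimination for abelian groups: every first-order formula $\phi(x;\bar y)$ in the language of abelian groups is equivalent modulo $T$ to a boolean combination of pp-formulas $\psi(x;\bar y)$ together with invariant sentences. The invariant sentences evaluate to either $\top$ or $\bot$ in the fixed model $\fA$, so they merely select which boolean combination of pp-instances actually applies; they contribute nothing to the collection of definable subsets of $A$. It therefore suffices to show that for each pp-formula $\psi(x;\bar y)$ and each tuple $\bar a$ from $A$, the instance $\psi(\fA;\bar a)$ is itself a boolean combination of cosets of the subgroups $\phi_{k,m}(\fA)$.

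An instance of a pp-formula in one free variable is either empty or a coset of the pp-definable subgroup $\psi(\fA;\bar 0)$, by the standard translation argument: if $b_0\in\psi(\fA;\bar a)$, then $\psi(\fA;\bar a)=b_0+\psi(\fA;\bar 0)$. So the remaining task is to show that every pp-definable subgroup $H=\psi(\fA;\bar 0)$ is a finite intersection of subgroups of the form $\phi_{k,m}(\fA)$. Since cosets of an intersection distribute as $b_0+(H_1\cap\cdots\cap H_r)=\bigcap_i (b_0+H_i)$, such a finite intersection of cosets is already the desired boolean combination.

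To accomplish the subgroup reduction, I would write $\psi(x)$ in the form $\exists \bar y\,(\bar c\, x + B\bar y = 0)$ for some integer column vector $\bar c$ and integer matrix $B$, and apply the Smith normal form $B=UDV$ with $U,V$ unimodular and $D$ diagonal. Replacing $\bar y$ by $V\bar y$ and $\bar c$ by $U^{-1}\bar c$ changes neither the defined set nor its pp-character, but in the new system each auxiliary variable appears in at most one equation. The formula thus splits into a finite conjunction of formulas of the shape $\exists y\,(ky=mx)$, which is $\phi_{k,m}(x)$, together with equations of the form $c_i x=0$, which is $\phi_{0,c_i}(x)$. Hence $H$ is a finite intersection of subgroups $\phi_{k,m}(\fA)$, completing the argument.

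The main technical input is the Baur--Monk theorem, which is the heart of Szmielew's analysis of abelian groups and is far from trivial to prove from scratch. Granting that, the principal obstacle in the sketch is the linear-algebra step: one must verify that the Smith normal form genuinely reduces an arbitrary pp-formula in one variable to the specific conjunctive form involving $\phi_{k,m}$, keeping careful track of which existentially quantified variables survive and in which equations they appear.
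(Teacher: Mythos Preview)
The paper does not supply its own proof of this proposition: it is stated as a citation of Corollary~2.13 of Prest's book and used as a black box. Your sketch is correct and is essentially the standard argument underlying that reference---Baur--Monk pp-elimination, reduction of a pp-instance to a coset of a pp-definable subgroup, and Smith normal form to decompose an arbitrary one-variable pp-formula into a conjunction of formulas $\phi_{k,m}$---so there is nothing to compare.

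One small clarification worth making explicit in your Smith-normal-form step: after writing the system as $U^{-1}\bar c\,x + D\bar z = 0$ with $D$ an $m\times n$ diagonal matrix of rank $r$, the rows beyond $r$ give the torsion conditions $c'_i x = 0$ (your $\phi_{0,c'_i}$), while the columns beyond $r$ correspond to variables $z_j$ that appear nowhere and may simply be dropped. You allude to this but it is the one place where a reader might worry about bookkeeping.
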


Let $\PP(A)$ be the set of all the p.p.-definable subgroups of $A$, which are namely the finite intersections of subgroups of the form $\phi_{k,m}(\fA)$ for various $k, m < \omega$.  Define a quasi-order $\precsim$ on all subgroups of $A$ by setting, for each subgroup $B_0$ and $B_1$ of $A$:
\begin{equation*}\label{Eq_QuasiorderPP}
 B_0 \precsim B_1 \text{ if and only if } [B_0 : B_0 \cap B_1] < \aleph_0.
\end{equation*}
Think of this as $B_0$ being almost a subgroup of $B_1$ (missing only by a finite index).  This quasi-order generates an equivalence relation $\sim$, which is called \textit{commensurability}.  For any $B_0 \sim B_1$, notice that $B_0 \cap B_1 \sim B_0$, so $\sim$-classes are closed under intersection.  We denote by $\PPt(A)$ the set $\PP(A) / \sim$ of equivalence classes.  Thus, $\precsim$ induces a partial order on $\PPt(A)$.  In \cite{ADHMS}, this partial order is used to characterize dp-minimality of $T$ as follows.

\begin{prop}[Corollary 4.12 of \cite{ADHMS}]\label{Prop_dpMinChar}
 The theory $T$ is dp-minimal if and only if $\left( \PPt(A); \precsim \right)$ is linear.
\end{prop}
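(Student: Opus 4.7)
The plan is to prove both directions of the equivalence, using the standard characterization that $T$ fails to be dp-minimal exactly when an ict-pattern of depth $2$ in the single variable $x$ exists. For the non-linearity-implies-not-dp-minimal direction: pick $\precsim$-incomparable $B_0, B_1 \in \PP(A)$, so both $[B_0 : B_0 \cap B_1]$ and $[B_1 : B_0 \cap B_1]$ are infinite. I would choose sequences $(u_i)_{i<\omega} \subseteq B_1$ and $(v_j)_{j<\omega} \subseteq B_0$ representing pairwise distinct cosets of $B_0 \cap B_1$ in $B_1$ and $B_0$ respectively. For every pair $(i,j)$, the element $x_{ij} := u_i + v_j$ lies in $u_i + B_0$ (since $v_j \in B_0$) and in $v_j + B_1$ (since $u_i \in B_1$), but not in $u_{i'} + B_0$ for $i' \neq i$ (because $u_i - u_{i'} \in B_1 \setminus B_0$ while $v_j \in B_0$), and symmetrically for the second row. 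Taking $\phi_\ell(x; y)$ to be the pp-formula expressing $x - y \in B_\ell$, the parameters $(u_i), (v_j)$ therefore witness a 2-ict array, and a standard Ramsey/compactness extraction yields mutually indiscernible sequences, so $T$ is not dp-minimal.

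For the reverse direction, assume $(\PPt(A); \precsim)$ is linear and, toward a contradiction, suppose a 2-ict pattern exists. By Proposition \ref{Thm_PPElim}, each of the two formulas in the pattern is a boolean combination of atomic coset formulas $\chi_k(x;\bar{y}) = \psi_k(x - c_k(\bar{y}))$, where $\psi_k$ is a pp-formula defining $H_k \in \PP(A)$ and $c_k$ is a $\mathbb{Z}$-linear term. A pigeonhole argument on the mutually indiscernible rows would reduce this (possibly after extracting subsequences and flipping signs) to a 2-ict pattern in which each row is a single atomic coset formula $H_\ell(x - c_\ell(\bar{a}_{\ell,k}))$ for pp-subgroups $H_0, H_1$. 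Unwinding the pattern: the elements $c_\ell(\bar{a}_{\ell,k})$ represent pairwise distinct cosets of $H_\ell$, and the nonemptiness of every cell forces $c_0(\bar{a}_{0,i}) - c_1(\bar{a}_{1,j}) \in H_0 + H_1$ for all $i,j$. By linearity, WLOG $H_0 \precsim H_1$, whence $(H_0 + H_1)/H_1 \cong H_0/(H_0 \cap H_1)$ is finite; but the $c_1(\bar{a}_{1,j})$ all lie in the single coset $c_0(\bar{a}_{0,0}) + (H_0 + H_1)$, so they can occupy only finitely many cosets of $H_1$, contradicting the fact that they represent infinitely many distinct such cosets.

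The main obstacle, as is typical for characterizations of model-theoretic tameness properties in pp-eliminable settings, is the reduction in the reverse direction from a general 2-ict pattern to one whose rows are single atomic coset formulas. Disjunctions are handled by pigeonhole on an indiscernible row (choosing one disjunct that succeeds infinitely often), conjunctions by replacing pairs of pp-subgroups with their intersection (which remains pp-definable and hence in $\PP(A)$), and negations by exchanging the roles of ``cell membership'' and ``cell non-membership'' in the pattern. Once this combinatorial reduction is in hand, the concluding step is purely index arithmetic driven by the commensurability relation $\sim$, requiring no further structural information about $\fA$.
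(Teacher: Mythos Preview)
The paper does not prove this proposition at all: it is stated as a citation (Corollary~4.12 of \cite{ADHMS}) and used as a black box in the subsequent arguments of Section~\ref{abgpssect}. There is therefore no proof in the paper to compare your proposal against.

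That said, your outline is the standard route to this result and the forward direction is fully correct as written. For the reverse direction, the overall architecture---reduce a putative depth-$2$ ict-pattern to one whose rows are single coset conditions, then use commensurability to bound the number of distinct cosets---is the right one, and your final index computation is clean. The reduction step, however, is where the work lies and your sketch is thin in one place: handling negations is not simply a matter of ``exchanging the roles of cell membership and cell non-membership,'' since an ict-pattern is asymmetric (each row must be satisfied at exactly one index and fail at all others). The usual fix is to observe that the negation of a coset of $H$ is, in a saturated model, a union of infinitely many cosets of $H$, and then pigeonhole again to pick one such coset per row; alternatively one works directly with the Baur--Monk invariants or with VC-density bounds as in \cite{ADHMS}. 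If you flesh out that step, the argument goes through.
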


This is then used as the main tool for proving a classification of dp-minimal theories of abelian groups. In the following, a \emph{nonsingular} group $B$ is one for which $B[p]$ and $B/pB$ are finite for all primes $p$.

\begin{prop}[Proposition 5.27 of \cite{ADHMS}]\label{Prop_ADHMS527}
 The theory $T$ is dp-minimal if and only if $\fA$ is elementarily equivalent to one of the following abelian groups:
 \begin{enumerate}
  \item 
  $\bigoplus\limits_{i\ge 1}\left(\mathbb{Z} / p^i \mathbb{Z} \right)^{(\alpha_i)} \oplus \mathbb{Z}\left( p^\infty \right)^{(\beta)} \oplus \left( \mathbb{Z}_{(p)} \right)^{(\gamma)} \oplus B$ for some prime $p$, a nonsingular abelian group $B$, and $\alpha_i$, $\beta$, and $\gamma$ cardinals with $\alpha_i < \aleph_0$ for all $i$.
  \item 
  $\left( \mathbb{Z} / p^k \mathbb{Z} \right)^{(\alpha)} \oplus \left( \mathbb{Z} / p^{k+1} \mathbb{Z} \right)^{(\beta)} \oplus B$ for some prime $p$, $k \ge 1$, finite abelian group $B$, and cardinals $\alpha$ and $\beta$, at least one of which is infinite.
 \end{enumerate}
\end{prop}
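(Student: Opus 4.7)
The plan is to combine the dp-minimality criterion of Proposition~\ref{Prop_dpMinChar} with Szmielew's classification of abelian groups up to elementary equivalence. Szmielew's theorem says that every abelian group is elementarily equivalent to a direct sum, with specified cardinal multiplicities, of the building blocks $\mathbb{Z}/p^n\mathbb{Z}$, $\mathbb{Z}(p^\infty)$, $\mathbb{Z}_{(p)}$ (for various primes $p$ and $n \ge 1$), and $\mathbb{Q}$, and the complete theory is determined by these multiplicities (the Szmielew invariants). The proposition thus amounts to identifying exactly which Szmielew profiles produce a linearly ordered $(\PPt(A); \precsim)$.

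For the easy direction, let $A$ have the shape in (1) or (2). By Proposition~\ref{Thm_PPElim}, every p.p.-definable subgroup is a finite intersection of groups $\phi_{k,m}(A)$. I would compute $\phi_{k,m}$ summand by summand: on each $\mathbb{Z}/p^n\mathbb{Z}$, $\mathbb{Z}(p^\infty)$, $\mathbb{Z}_{(p)}$, $\mathbb{Q}$, or finite summand, $\phi_{k,m}$ is either the whole summand, zero, or a subgroup of finite index (or of finite order). Because $B$ is nonsingular in (1) and finite in (2), its contribution to each $\phi_{k,m}$ only affects commensurability by a finite index. For shape (1), the commensurability class of $\phi_{k,m}(A)$ is then controlled by the $p$-adic valuations $v_p(k)$ and $v_p(m)$, yielding a chain in $\PPt(A)$; for shape (2), the bounded exponent at $p$ further collapses the chain to a finite one. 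Linearity follows, and Proposition~\ref{Prop_dpMinChar} delivers dp-minimality.

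For the hard direction, assume $T$ is dp-minimal and decompose $A$ via Szmielew. First, I would argue that at most one prime $p$ can contribute a non-nonsingular part. If two distinct primes $p, q$ both fail nonsingularity (i.e., $A[p]$ or $A/pA$ infinite, and likewise at $q$), then $pA$ and $qA$ are incomparable in $\PPt(A)$: in the prototype $A = (\mathbb{Z}/p\mathbb{Z})^{(\aleph_0)} \oplus (\mathbb{Z}/q\mathbb{Z})^{(\aleph_0)}$, both $[pA : pA \cap qA]$ and $[qA : pA \cap qA]$ are infinite, and analogous constructions handle the $\mathbb{Z}(p^\infty)$ and $\mathbb{Z}_{(p)}$ cases by translating torsion obstructions into divisibility obstructions via duality. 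This isolates a single distinguished prime $p$; all other primes contribute only nonsingularly and are absorbed into a nonsingular summand $B$. At the prime $p$, I would then dichotomize on boundedness of the $p$-primary exponent of $A$. In the unbounded case, arbitrary finite multiplicities $\alpha_i$ together with any $\beta$ and $\gamma$ keep the chain of $p^jA \cap A[p^i]$ intact, placing $A$ in case (1); in the bounded case, linearity forces $\alpha_i$ to be nonzero for at most two consecutive indices $k, k+1$, placing $A$ in case (2). Conversely, infinite $\alpha_i$ and infinite $\alpha_j$ with $|i-j| \ge 2$, or infinite $\alpha_i$ coexisting with nonzero $\beta$ or $\gamma$, always creates an incomparable pair of commensurability classes.

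The main obstacle is this final case analysis inside the distinguished prime: verifying that every unlisted Szmielew profile at $p$ admits explicit p.p.-subgroups witnessing non-linearity, and dually that every listed profile yields a provably linear chain. The computations on individual building blocks are routine, but the bookkeeping required to exclude every unlisted combination is delicate, especially when mixing torsion summands of different exponents with $\mathbb{Z}_{(p)}$ and $\mathbb{Z}(p^\infty)$ parts, and it is this combinatorial analysis of the lattice $\PPt(A)$ that constitutes the technical heart of the argument.
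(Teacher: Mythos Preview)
The paper does not contain a proof of this proposition: it is quoted verbatim as Proposition~5.27 of \cite{ADHMS} and used as a black box, so there is no proof here against which to compare your proposal.

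That said, your strategy is the natural one and aligns with how the result is established in \cite{ADHMS}: reduce to a Szmielew normal form, then use the criterion of Proposition~\ref{Prop_dpMinChar} by computing the commensurability lattice of the subgroups $\phi_{k,m}(\fA)$ summand by summand. Your identification of the key dichotomy---at most one prime can fail nonsingularity, and at that prime one splits on bounded versus unbounded exponent---is correct in spirit. The part of your sketch that is genuinely incomplete is the exclusion step at the distinguished prime: you assert that infinite $\alpha_i$ and infinite $\alpha_j$ with $|i-j|\ge 2$, or infinite $\alpha_i$ together with nonzero $\beta$ or $\gamma$, yields an incomparable pair, but you do not name the witnessing pair of p.p.-subgroups. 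In each such case one must exhibit specific groups of the form $p^s A\cap A[p^t]$ whose mutual indices are both infinite; this is routine but must actually be carried out, and your remark about ``translating torsion obstructions into divisibility obstructions via duality'' is too vague to count as an argument. Similarly, in the forward direction for case~(1) you should be explicit that the commensurability classes are exactly those of the groups $p^j A$ and $A[p^j]$, forming a single chain; this is what makes the nonsingular summand $B$ harmless.
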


In this section, we will prove a characterization for when $T$ is VC-minimal (and convexly orderable) analogous to Proposition \ref{Prop_dpMinChar}, and likewise use it to obtain a complete list of VC-minimal theories of abelian groups.

\begin{lem}\label{Lem_VCMinAbGrps}
 Suppose that there exists $\mathcal{H} \subseteq \PP(A)$ such that
 \begin{enumerate}
  \item $(\mathcal{H}; \subseteq)$ is a linear order; and
  \item For all $k$ and $m$, $\phi_{k,m}(\fA)$ is a boolean combination of cosets of elements $H \in \mathcal{H}$.
 \end{enumerate}
 Then, $T$ is VC-minimal.
\end{lem}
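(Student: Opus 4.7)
The plan is to build a directed generating family for $T$ directly out of $\mathcal{H}$. For each $H \in \mathcal{H}$, fix a parameter-free p.p.-formula $\chi_H(x)$ defining $H$ in $\fA$, and set
\[
\Psi = \{\chi_H(x - y) : H \in \mathcal{H}\}.
\]
In any model $\fM \models T$, an instance $\chi_H(x - b)$ defines precisely the coset $b + H^\fM$, where $H^\fM = \chi_H(\fM)$. So the instances of $\Psi$ in $\fM$ are exactly the cosets of the subgroups $H^\fM$ for $H \in \mathcal{H}$.

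The first step is to verify directedness. Condition (1), phrased for each comparable pair as the universal sentence $\forall x(\chi_{H_1}(x) \to \chi_{H_2}(x))$, transfers from $\fA$ to every $\fM \models T$, so $\{H^\fM : H \in \mathcal{H}\}$ remains linearly ordered by inclusion in $\fM$. Given two instances $b_1 + H_1^\fM$ and $b_2 + H_2^\fM$, assume without loss of generality that $H_1 \subseteq H_2$. Then $b_1 + H_1^\fM$ sits inside a unique coset of $H_2^\fM$, so it is either contained in $b_2 + H_2^\fM$ or disjoint from it, as required.

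The second step is to show $\Psi$ generates. By Prest's theorem (Proposition \ref{Thm_PPElim}) applied in $\fM$, every definable subset of $M$ is a boolean combination of cosets of the subgroups $\phi_{k,m}(\fM)$, so it suffices to exhibit each such coset as a boolean combination of instances of $\Psi$. Condition (2) gives, for each $k,m$, a boolean formula $\beta_{k,m}$, elements $H_1,\ldots,H_r \in \mathcal{H}$, and parameters $a_1,\ldots,a_r \in A$ such that $\phi_{k,m}(x)$ and $\beta_{k,m}(\chi_{H_1}(x - a_1), \ldots, \chi_{H_r}(x - a_r))$ define the same set in $\fA$. Existentially closing over the $a_i$ yields a sentence true in $\fA$, hence in $\fM$, producing parameters $b_1,\ldots,b_r \in M$ that witness the same boolean identity in $\fM$. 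Translating by any $c \in M$ then expresses $c + \phi_{k,m}(\fM)$ as a boolean combination of instances of $\Psi$, completing the argument. The only real subtlety is the passage of condition (2) from the specific model $\fA$ to arbitrary models of $T$, and this is resolved by the standard device of existentially quantifying the witnessing parameters, so I do not anticipate any serious obstacle.
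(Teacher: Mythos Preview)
Your proof is correct and follows essentially the same approach as the paper: both take $\Psi$ to be the family of coset formulas $\{x - y \in H : H \in \mathcal{H}\}$, derive directedness from the linear ordering (1), and derive generation from Prest's theorem together with (2). If anything, you are more careful than the paper in explicitly transferring directedness and generation from $\fA$ to an arbitrary model $\fM \models T$ via first-order sentences; the paper simply works in $\fA$ and leaves this transfer implicit.
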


\begin{proof}
  For each $H \in \mathcal{H}$, let $\psi_H(x; y)$ be the formula $x - y \in H$, and let $\Psi = \set{\psi_H}{H \in \mathcal{H}}$. The instances of $\Psi$ define precisely the cosets of members of $\mathcal{H}$. We claim that $\Psi$ is a generating family for $T$.  

First, to see that $\Psi$ is directed, fix $H_1, H_2 \in \mathcal{H}$ and $a_1, a_2 \in A$.  By (1), we may assume without loss of generality that $H_1 \subseteq H_2$.  Then each coset of $H_1$ is a subset of a coset of $H_2$, so that either $a_1 + H_1 \subseteq a_2 + H_2$ or $(a_1 + H_1) \cap (a_2 + H_2) = \emptyset$ as required.  

By Proposition \ref{Thm_PPElim}, all definable subsets of $A$ are boolean combinations of cosets of $\phi_{k,m}(\fA)$ for various $k, m < \omega$.  So (2) implies that all parameter-definable subsets of $A$ are in fact boolean combination of cosets of elements $H \in \mathcal{H}$.
\end{proof}

\begin{cor}\label{Cor_ZPlus}
 The theory $T = \Th(\mathbb{Z}; +)$ is VC-minimal.
\end{cor}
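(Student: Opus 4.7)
The plan is to apply Lemma \ref{Lem_VCMinAbGrps} directly, by exhibiting a linearly ordered family $\mathcal{H} \subseteq \PP(\mathbb{Z})$ cofinal in the $\phi_{k,m}$-subgroups.

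First I would identify all p.p.-definable subgroups of $\mathbb{Z}$ explicitly. Since $\mathbb{Z}$ is torsion-free, unwinding $\phi_{k,m}(x) \equiv \exists y(ky = mx)$ case-by-case gives only three possibilities: $\phi_{k,m}(\mathbb{Z}) = \mathbb{Z}$ (if $k \neq 0$ and $m = 0$, or if $k = m = 0$), $\phi_{k,m}(\mathbb{Z}) = \{0\}$ (if $k = 0$ and $m \neq 0$), and otherwise $\phi_{k,m}(\mathbb{Z}) = (k/\gcd(k,m))\mathbb{Z}$, which is $n\mathbb{Z}$ for some $n \geq 1$.

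Next I would take $\mathcal{H} = \{\{0\}\} \cup \{n!\,\mathbb{Z} : n \geq 1\}$. Condition (1) of Lemma \ref{Lem_VCMinAbGrps} is immediate, since $\{0\} \subseteq \cdots \subseteq 3!\,\mathbb{Z} \subseteq 2!\,\mathbb{Z} \subseteq 1!\,\mathbb{Z} = \mathbb{Z}$ is a descending chain, so $(\mathcal{H}; \subseteq)$ is linearly ordered. For condition (2), the subgroups $\mathbb{Z}$ and $\{0\}$ lie in $\mathcal{H}$ already, and for $n \geq 2$ one has $n \mid n!$, hence
\[
 n\mathbb{Z} = \bigcup_{i=0}^{n!/n - 1} (in + n!\,\mathbb{Z}),
\]
expressing $n\mathbb{Z}$ as a finite (disjoint) union of cosets of $n!\,\mathbb{Z} \in \mathcal{H}$. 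This verifies (2), and Lemma \ref{Lem_VCMinAbGrps} then delivers VC-minimality of $T$.

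There is no real obstacle here — the argument is a direct calculation. The one subtlety worth flagging is that the singleton $\{0\}$ must be included in $\mathcal{H}$: it is itself p.p.-definable (as $\phi_{0,1}(\mathbb{Z})$), yet it is not a finite boolean combination of cosets of any nontrivial subgroups $n!\,\mathbb{Z}$, so omitting it would break condition (2). Including it preserves the linear order since $\{0\}$ sits below every other member of $\mathcal{H}$.
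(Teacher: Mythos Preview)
Your proof is correct and follows exactly the approach in the paper, which also takes $\mathcal{H} = \{(n!)\cdot\mathbb{Z} : 1 \le n < \omega\} \cup \{0\}$ and appeals to Lemma~\ref{Lem_VCMinAbGrps}. You have supplied the verification details that the paper leaves to the reader, including the helpful remark about why $\{0\}$ must be included.
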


\begin{proof}
 Let $\mathcal{H} = \set{(n!) \cdot \mathbb{Z}}{1 \le n < \omega} \cup \{ 0 \}$.  This satisfies the conditions in Lemma \ref{Lem_VCMinAbGrps}.
\end{proof}

For a prime $p$, let $\mathbb{Z}_{(p)}$ be the additive group of the ring $\mathbb{Z}$ localized at the prime ideal $(p) = p \mathbb{Z}$.  Let $\mathbb{Z}(p^\infty)$ be the Pr\"{u}fer $p$-group, which is the direct limit of $(\mathbb{Z} / p^k \mathbb{Z})$ for all $k \ge 1$.  For an abelian group $A$ and cardinal $\kappa$, let $A^{(\kappa)}$ be the direct sum of $\kappa$ copies of $A$.

\begin{cor}\label{Cor_OtherVCMin}
 The theories of the following abelian groups are VC-minimal:
 \begin{enumerate}
  \item $\left( \mathbb{Z} / p^k \mathbb{Z} \right)^{(\aleph_0)}$ for some $k < \omega$ and prime $p$,
  \item $\left( \mathbb{Z} / p^k \mathbb{Z} \right)^{(\aleph_0)} \oplus \left( \mathbb{Z} / p^{k+1} \mathbb{Z} \right)^{(\aleph_0)}$ for some $k < \omega$ and prime $p$, and
  \item $\mathbb{Z}(p^\infty)^{(\beta)} \oplus \mathbb{Z}_{(p)}^{(\gamma)}$ for cardinals $\beta$ and $\gamma$ and prime $p$.
 \end{enumerate}
\end{cor}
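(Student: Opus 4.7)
The plan is to treat all three cases uniformly by invoking Lemma \ref{Lem_VCMinAbGrps}, so the task reduces to exhibiting in each case a chain $\mathcal{H} \subseteq \PP(A)$ containing every $\phi_{k,m}(\fA)$. As a preliminary simplification, observe that in each of these groups, multiplication by any integer coprime to $p$ is a bijection, so after replacing $k$ and $m$ by their $p$-parts $p^i$ and $p^j$, one finds $\phi_{p^i,p^j}(\fA) = A$ when $j \geq i$, and $\phi_{p^i,p^j}(\fA) = p^{i-j}A + A[p^j]$ when $j < i$. Hence it is enough to produce a chain containing every $p^l A$ and every $A[p^m]$ and to check that the relevant sums $p^l A + A[p^m]$ stay within that chain.

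For case (1), where $A = (\mathbb{Z}/p^k\mathbb{Z})^{(\aleph_0)}$, the identity $A[p^m] = p^{k-m}A$ collapses everything to the finite chain $A \supsetneq pA \supsetneq \cdots \supsetneq p^k A = \{0\}$, and sums $p^lA + p^{k-m}A = p^{\min(l,k-m)}A$ stay within it. For case (3), the divisibility of $\mathbb{Z}(p^\infty)^{(\beta)}$ gives $A[p^m] \subseteq \mathbb{Z}(p^\infty)^{(\beta)} \subseteq p^l A$ for all $m$ and $l$, so
\[
\mathcal{H} = \{0\} \cup \set{A[p^m]}{m \geq 1} \cup \set{p^l A}{l \geq 0}
\]
is a chain with every $A[p^m]$ strictly below every $p^l A$, and the sums $p^l A + A[p^m] = p^l A$ remain within $\mathcal{H}$.

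The main obstacle is case (2), where $A = (\mathbb{Z}/p^k\mathbb{Z})^{(\aleph_0)} \oplus (\mathbb{Z}/p^{k+1}\mathbb{Z})^{(\aleph_0)}$ and the two families $\{p^l A\}$ and $\{A[p^j]\}$ genuinely interleave. Here I would work summand by summand: because $\mathbb{Z}/p^{k+1}\mathbb{Z}$ sits one $p$-level higher than $\mathbb{Z}/p^k\mathbb{Z}$, one obtains the strict inclusions $p^{k-j+1}A \subsetneq A[p^j] \subsetneq p^{k-j}A$ for $1 \leq j \leq k$, which assemble into the alternating chain
\[
\{0\} \subsetneq p^k A \subsetneq A[p] \subsetneq p^{k-1}A \subsetneq A[p^2] \subsetneq \cdots \subsetneq p A \subsetneq A[p^k] \subsetneq A.
\]
A short computation using coordinates $(r,s)$ for the subgroup $(p^r \mathbb{Z}/p^k\mathbb{Z})^{(\aleph_0)} \oplus (p^s \mathbb{Z}/p^{k+1}\mathbb{Z})^{(\aleph_0)}$ shows that every sum $p^l A + A[p^j]$ coincides with one of the rungs of this chain, so condition (2) of Lemma \ref{Lem_VCMinAbGrps} is satisfied. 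Most of the routine work is the bookkeeping for how pairs $(l,j)$ match up with the rungs in this third case.
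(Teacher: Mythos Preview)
Your proposal is correct and follows essentially the same route as the paper: in each case you invoke Lemma~\ref{Lem_VCMinAbGrps} with exactly the chains the paper uses (the finite chain of $p^iA$'s in case~(1), the interleaved chain $0\subset p^kA\subset A[p]\subset p^{k-1}A\subset\cdots$ in case~(2), and the torsion-below-multiples chain in case~(3)). If anything, you are more explicit than the paper about why condition~(2) of the lemma holds, since you reduce an arbitrary $\phi_{k,m}(\fA)$ to a sum $p^lA+A[p^j]$ and verify that each such sum lands back in the chain; the paper simply records $p^iA$ and $A[p^i]$ and asserts the chain works.
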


\begin{proof}
 (1) Since $p^i A = \left( p^i \mathbb{Z} / p^k \mathbb{Z} \right)^{(\aleph_0)}$ and $A[p^i] = \left( p^{k-i} \mathbb{Z} / p^k \mathbb{Z} \right)^{(\aleph_0)}$, we see that
 \[
  \PP(A) = \set{\left( p^i \mathbb{Z} / p^k \mathbb{Z} \right)^{(\aleph_0)}}{0 \le i \le k},
 \]
 which is itself a chain.  We conclude that $T$ is VC-minimal by Lemma \ref{Lem_VCMinAbGrps}.
 
 (2) Notice that, for each $i$, 
\begin{align*}
p^i A &= \left( p^i \mathbb{Z} / p^k \mathbb{Z} \right)^{(\aleph_0)} \oplus \left( p^i \mathbb{Z} / p^{k+1} \mathbb{Z} \right)^{(\aleph_0)}\\ 
A[p^i] &= \left( p^{k-i} \mathbb{Z} / p^k \mathbb{Z} \right)^{(\aleph_0)} \oplus \left( p^{k+1-i} \mathbb{Z} / p^{k+1} \mathbb{Z} \right)^{(\aleph_0)}.
\end{align*}  
So, let $\mathcal{H}$ be the chain $0 \subset p^k A \subset A[p] \subset p^{k-1} A \subset A[p^2] \subset ...$ and use Lemma \ref{Lem_VCMinAbGrps} to conclude.
 
 (3) In this case, we have
\begin{align*}
A[p^i] &= (\mathbb{Z}(p^\infty)[p^i])^{(\beta)} \oplus 0\\ 
p^i A &= \mathbb{Z}(p^\infty)^{(\beta)} \oplus \left( p^i \mathbb{Z}_{(p)} \right)^{(\gamma)}.
\end{align*}  
Use the chain $0 \subseteq A[p] \subseteq A[p^2] \subseteq ... \subseteq p^2 A \subseteq p A \subseteq A$ along with Lemma \ref{Lem_VCMinAbGrps} to conclude.
\end{proof}

However, not every dp-minimal abelian group is VC-minimal or even convexly orderable.

\begin{lem}\label{Lem_dpMinnotVCMin}
 Suppose that there exists a chain of $\emptyset$-definable subgroups $A = A_0 \supseteq A_1 \supseteq A_2 \supseteq ...$ and a $\emptyset$-definable subgroup $B \subseteq A$ such that
 \begin{enumerate}
  \item for each $i < \omega$, $A_i \cap B \neq A_{i+1} \cap B$, and
  \item for each $i < \omega$, $[A_i : A_i \cap B] \ge \aleph_0$.
 \end{enumerate}
 Then, $T = \Th(A; +)$ is not convexly orderable.  Hence, $T$ is not VC-minimal.
\end{lem}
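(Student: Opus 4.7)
The plan is to adapt the proof of Theorem \ref{thm_VFNotQVCMin} to the present setting, with the chain $\{A_i\}$ playing the role of the valuation filtration in the valued field and $B$ playing the role of the subgroup $p\Gamma$ of the value group. Suppose, for contradiction, that $T$ is convexly orderable via a linear order $\unlhd$ on $A$, and let $k$ be the uniform bound on $\unlhd$-convex pieces per coset of $B$ coming from convex orderability applied to the formula $\phi(x;y) = (x - y \in B)$. The goal is to produce, for arbitrary $n$, a coset of $B$ containing more than $k$ convex pieces.

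Using condition (1), for each $i<\omega$ pick $b_i \in (A_i \cap B) \setminus A_{i+1}$. Using condition (2) together with the fact that an abelian group is never the union of two proper subgroups, one rules out $A_i \subseteq A_{i+1} \cup B$ and thus obtains $d_i \in A_i \setminus (A_{i+1} \cup B)$. Let $c_i = b_i$ for $i$ even and $c_i = d_i$ for $i$ odd, so that $c_i \in A_i \setminus A_{i+1}$ always, and $c_i \in B$ if and only if $i$ is even. Define $\mathcal{A}_N = \{\sum_{i \in S} c_i : S \subseteq \{0, \ldots, N-1\}\}$; a routine ``ultrametric'' argument shows $|\mathcal{A}_N| = 2^N$, and for distinct subsets $S, S'$, the difference $\sum_{i \in S}c_i - \sum_{i \in S'}c_i$ lies in $A_m \setminus A_{m+1}$, where $m = \min(S \triangle S')$.

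Then, for $N$ sufficiently large compared to $k$, I would mimic Lemma \ref{Lem_NotFinitelyMany} to inductively extract a $\unlhd$-increasing sequence $a_0 \lhd a_1 \lhd \cdots \lhd a_N$ in $\mathcal{A}_N$ by taking $a_0$ to be the $\unlhd$-minimum and $a_{i+1}$ to be the $\unlhd$-minimum element of $\mathcal{A}_N$ satisfying $a_{i+1} - a_i \in A_i \setminus A_{i+1}$, together with a further constraint that the odd-indexed bits of $a_{i+1}$ above index $i$ agree with those of $a_i$ (so that the ``tail'' of the single difference $a_{i+1} - a_i$ is a subset sum of the $b_k$'s, and in particular lies in $B$). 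The ultrametric property of the chain yields $a_j - a_i \in A_j \setminus A_{j+1}$ for $j<i$, and the additional constraint ensures that $a_l - a_{l+1} \in B$ if and only if $l$ is even. Telescoping $a_j - a_N = \sum_{l=j}^{N-1}(a_l - a_{l+1})$ and projecting to $A/B$, the sequence of membership values $(\,a_j \in a_N + B\,)_{j \le N}$ should switch $\Omega(N)$ times as $j$ varies, forcing the coset $a_N + B$ to break into more than $k$ $\unlhd$-convex pieces once $N \gg k$.

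The main obstacle is the very last step. In the valued field proof, the analog of ``$a_j - a_N \in B$'' is determined purely by the level, since ``$v \in p\Gamma$'' depends only on the valuation, so alternation of $B$-membership follows immediately from the alternation of $\gamma_i$ modulo $p\Gamma$. Here, by contrast, the telescoped $B$-class of $a_j - a_N$ is a $\mathbb{Z}$-linear combination of the residues $d_l + B$ in $A/B$, and whether this combination is trivial depends on the arithmetic of $A/B$. Closing the argument therefore requires a case analysis or a more refined selection of the $d_i$'s (for example, arranging their images in $A/B$ to satisfy a uniform independence or torsion condition, possibly using that $(A_l + B)/B$ forms a descending chain in $A/B$), so as to guarantee that the partial sums switch parity often enough to produce the required number of convex pieces in $a_N + B$.
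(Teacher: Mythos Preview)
Your proposal has a real gap, and it is precisely the one you identify in the last paragraph. In the valued-field argument, membership in the analogue of $B$ is determined by the \emph{level} $\gamma_i$ alone, so once the sequence $a_0\lhd\cdots\lhd a_N$ has been built, alternation of $B$-membership is automatic. Here, by contrast, after telescoping you are left with
\[
a_j - a_N + B \;=\; \sum_{\substack{l\in[j,N-1]\\ l\ \text{odd}}} \bigl(\pm d_l + B\bigr)
\]
in $A/B$, and nothing in the hypotheses controls these partial sums. If, say, the images $d_l+B$ are $\mathbb{Z}$-independent in $A/B$, then every nonempty partial sum is nonzero and you get no alternation at all: only $a_N$ lies in $a_N+B$, and a single element gives one convex piece, not many. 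Your suggestion to refine the choice of the $d_l$ is not obviously workable either, since the $d_l$ must live at distinct levels $A_l\setminus A_{l+1}$, so one cannot simply take them all equal, and arranging cancellations in $A/B$ seems to require structural information about $A/B$ that is not available. Note also that your argument only exploits a weak consequence of condition~(2) (namely $A_i\not\subseteq A_{i+1}\cup B$), whereas the infinite index is in fact the key ingredient.

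The paper's proof avoids this obstacle by a completely different mechanism that does not attempt to build a single alternating sequence. Instead, one fixes the bound $k$, uses condition~(2) at level $k$ to produce \emph{infinitely many} distinct cosets $a+B$ with $a\in A_k$, and uses condition~(1) to show that every such coset meets each annulus $A_{i-1}\setminus A_i$ for $1\le i\le k$. Since each $A_i$ has only finitely many $\unlhd$-boundary points, one can discard the finitely many cosets whose convex components straddle some boundary; for any remaining coset $a+B$, each convex component is either entirely inside or entirely outside each $A_i$, so passing from $A_{i-1}$ to $A_i$ loses at least one component. After $k$ steps no component survives inside $A_k$, contradicting $a\in A_k\cap(a+B)$. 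The point is that the infinite supply of cosets absorbs the finitely many exceptional cases, and the counting is then purely combinatorial, with no arithmetic in $A/B$ required.
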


\begin{proof}
 By way of contradiction, suppose that $\fA$ is convexly ordered by $\unlhd$.  In particular, suppose that each instance of the formula $x - y \in B$ is a union of at most $k$ $\unlhd$-convex subsets of $A$ for some fixed $k < \omega$.  

Since $[A_k : A_k \cap B] \ge \aleph_0$, the set 
\[
\mathcal{C} = \set{a + B}{a \in A_k}
\]
 of cosets of $B$ is infinite.
 On the other hand, for each $1 \le i \le k$, $A_i \cap B \subsetneq A_{i-1} \cap B$.  So, for any choice of $b \in (A_{i-1} \setminus A_i) \cap B$ and $a \in A_k$, $a + b \in (A_{i-1} \setminus A_i)$.  Therefore, for all $a \in A_k$ and $1 \le i \le k$, $(a + B) \cap (A_{i-1} \setminus A_i)$ is non-empty.  That is, $(A_{i-1} \setminus A_i)$ intersects non-trivially each element of $\mathcal{C}$.
 
 By convex orderability, for each $i \le k$, $A_i$ is a finite union of $\unlhd$-convex subsets of $A$.  Let $\mathcal{C}_i$ denote the elements $a + B \in \mathcal{C}$ such that, for some $\unlhd$-convex component $C$ of $a + B$, $C \nsubseteq A_i$ and $C \cap A_i \neq \emptyset$.  By convexity, there can be only finitely many such $a + B$, namely the ones covering the finitely many ``endpoints'' of $A_i$.  Hence, $\mathcal{C}_i$ is finite for each $i\le k$.  Finally, set 
 \[
  \mathcal{C}^* = \mathcal{C} \setminus \left( \bigcup_{i \le k} \mathcal{C}_i \right).
 \]
 Since $\mathcal{C}$ is infinite, $\mathcal{C}^*$ is also infinite and, in particular, non-empty.  

We claim that each $A_i$ contains at most $k - i$ $\unlhd$-convex components of each element of $\mathcal{C}^*$. By choice of $k$, this clearly holds for $i=0$. So suppose that $i > 0$ and that the claim holds for $A_{i-1}$.  Consider $a+B\in\mathcal{C}^*$. By construction, for each $\unlhd$-convex component $C$ of $a + B$, either $C \subseteq A_i$ or $C \cap A_i = \emptyset$.  However, as observed above $(a + B) \cap (A_{i-1} \setminus A_i) \neq \emptyset$, so at least one of the $\unlhd$-convex components of $a + B$ contained in $A_{i-1}$ must be disjoint from $A_i$. By assumption, $A_{i-1}$ contains at most $k-(i-1)$ $\unlhd$-convex components of $a+B$. Thus $A_i$ contains at most $k-i$. The conclusion follows by induction.
 
Therefore, for all $a + B \in \mathcal{C}^*$, $(a + B) \cap A_k = \emptyset$. On the other hand, $A_k$ intersects every coset $a+B\in\mathcal{C}$ by definition of $\mathcal{C}$. This gives the desired contradiction.
\end{proof}

We use this to produce an example of an abelian group whose theory is dp-minimal but not VC-minimal.

\begin{cor}\label{Cor_ExamplesNotVCMinAG}
 Fix some $\alpha_i<\aleph_0$ for each $i \ge 1$ such that the set $\set{i}{\alpha_i > 0}$ is infinite.  Then the theory of the abelian group
 \[
  A = \bigoplus_{i \ge 1} \left( \mathbb{Z} / p^i \mathbb{Z} \right)^{\alpha_i}
 \]
is not convexly orderable.
\end{cor}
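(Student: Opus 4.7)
The plan is to apply Lemma \ref{Lem_dpMinnotVCMin} with a well-chosen chain of $\emptyset$-definable subgroups and a $\emptyset$-definable subgroup $B \subseteq A$. Let $J = \set{i \ge 1}{\alpha_i > 0}$, which is infinite by hypothesis, and enumerate it as $j_1 < j_2 < \ldots$. I would take $B = A[p] = \phi_{0,p}(\fA)$, $A_0 = A$, and $A_i = p^{j_i}A = \phi_{p^{j_i},1}(\fA)$ for $i \ge 1$. All of these are $\emptyset$-definable in the pure group language $\cL = \{+\}$, and the $A_i$ form a decreasing chain since $j_1 < j_2 < \ldots$.

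The verification of the two hypotheses of the lemma is a direct structural computation. For any $j \ge 0$,
\[
  p^j A = \bigoplus_{k > j} \left(p^j\mathbb{Z}/p^k\mathbb{Z}\right)^{(\alpha_k)}, \qquad (p^j A)[p] = \bigoplus_{k > j}\left(p^{k-1}\mathbb{Z}/p^k\mathbb{Z}\right)^{(\alpha_k)}.
\]
For condition (1) of Lemma \ref{Lem_dpMinnotVCMin}, $(p^{j_i}A)[p]$ and $(p^{j_{i+1}}A)[p]$ differ at the summand corresponding to $k = j_{i+1}$, which is nontrivial because $\alpha_{j_{i+1}} > 0$; the case $i = 0$ is analogous, comparing $A[p]$ with $(p^{j_1}A)[p]$ at $k = j_1$. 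For condition (2), the map $x \mapsto px$ induces an isomorphism $A_i / (A_i \cap B) \cong pA_i = p^{j_i + 1}A$, which is a direct sum of nontrivial finite groups indexed by the infinite set $\set{k \in J}{k > j_i + 1}$, hence infinite.

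The main subtlety is the choice of the chain. A naive choice like $A_i = p^iA$ does not work: whenever $\alpha_{i+1} = 0$, the $p$-torsion subgroups $(p^iA)[p]$ and $(p^{i+1}A)[p]$ coincide, violating condition (1). Indexing the chain along $J$ sidesteps this, and the infinitude of $J$ is used both to ensure that each step of the chain changes the $p$-torsion and to guarantee the infinite index required in condition (2). Once the chain is in place, Lemma \ref{Lem_dpMinnotVCMin} directly yields that $T = \Th(A;+)$ is not convexly orderable.
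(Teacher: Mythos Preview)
Your proof is correct and follows essentially the same approach as the paper: the paper also takes $B=A[p]$ and $A_\ell=p^{i_\ell}A$ where $i_0=0$ and $i_1<i_2<\ldots$ enumerates $\set{i}{\alpha_i>0}$, then appeals to Lemma~\ref{Lem_dpMinnotVCMin}. The paper simply asserts that the hypotheses are ``straightforward to check,'' whereas you have spelled out the verification (including the helpful observation that $A_i/(A_i\cap B)\cong pA_i$ via $x\mapsto px$), but the argument is the same.
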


\begin{proof}
 Let $I = \set{i}{\alpha_i > 0}$, let $i_0 = 0$, and let $i_1 < i_2 < ...$ enumerate $I$.  It is straightforward to check that the $\emptyset$-definable subgroups
\[
  A_\ell = p^{i_\ell} A \text{ for all } \ell < \omega, \text{ and } B = A[p]
 \]
satisfy the hypotheses of Lemma \ref{Lem_dpMinnotVCMin}.
\end{proof}

By Proposition \ref{Prop_ADHMS527} (1), we see that this $A$ is, in fact, dp-minimal.

\begin{defn}\label{Defn_UpwardCohere}
 For $X \in \PPt(A)$ (i.e., $X$ is a $\sim$-class of $\PP(A)$), we say that $X$ is \emph{upwardly coherent} if there exists $H \in X$ such that, for all $H_1 \in \PP(A)$ with $H \precnsim H_1$, we have that $H \subseteq H_1$.

By extension, we say that the group $A$ is \emph{upwardly coherent} if every $X\in\PPt(A)$ is.
\end{defn}

Intuitively, upward coherence means the class contains a particular subgroup for which being \emph{almost} a proper subgroup is sufficient to be, in fact, a subgroup. In the presence of dp-minimality, this condition implies VC-minimality as shown in the next lemma.

\begin{lem}\label{Lem_UpwardCohereVCMin}
Suppose $T=\Th(A;+)$ is dp-minimal.  If $A$ is upwardly coherent, then $T$ is VC-minimal.
\end{lem}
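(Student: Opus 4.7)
The plan is to build a $\subseteq$-chain $\mathcal{H} \subseteq \PP(A)$ meeting the hypotheses of Lemma \ref{Lem_VCMinAbGrps}: namely, $\mathcal{H}$ linearly ordered by $\subseteq$ and every $\phi_{k,m}(\fA)$ expressible as a boolean combination of cosets of elements of $\mathcal{H}$.

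First, dp-minimality (via Proposition \ref{Prop_dpMinChar}) gives that $(\PPt(A); \precsim)$ is a linear order, and upward coherence produces, for each $X \in \PPt(A)$, a representative $H_X \in X$ satisfying $H_X \subseteq H'$ for every $H' \in \PP(A)$ with $H_X \precnsim H'$. The candidate chain $\{H_X : X \in \PPt(A)\}$ is then linearly ordered by $\subseteq$ across classes (if $X \precnsim Y$, then $H_X \precnsim H_Y$, so $H_X \subseteq H_Y$ by upward coherence), but it is not fine enough: an arbitrary $\phi_{k,m}(\fA) \in X$ need not be comparable to $H_X$ under $\subseteq$. I therefore need to refine within each class.

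The key observation is that $\PP(A)$ is countable, as each element is a finite intersection drawn from the countable family $\{\phi_{k,m}(\fA) : k, m < \omega\}$. Fix a class $X$, enumerate $X = \{H_n^X : n < \omega\}$, and set $K_0^X = H_X \cap H_0^X$ and $K_{n+1}^X = K_n^X \cap H_{n+1}^X$. Each $K_n^X \in X$, by the closure of $\sim$-classes under intersection (noted just before Proposition \ref{Prop_dpMinChar}). Moreover, since $K_n^X \subseteq H_X$, the upward coherence property descends from $H_X$ to $K_n^X$: any $H' \in \PP(A)$ with $K_n^X \precnsim H'$ satisfies $H_X \precnsim H'$ as well (same class), so $K_n^X \subseteq H_X \subseteq H'$.

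Now take $\mathcal{H} = \{K_n^X : X \in \PPt(A),\ n < \omega\}$. Within a fixed class, $K_0^X \supseteq K_1^X \supseteq \cdots$ by construction; between classes $X \precnsim Y$, upward coherence of $K_n^X$ delivers $K_n^X \subseteq K_m^Y$ for all $n, m$. So $\mathcal{H}$ is a chain under $\subseteq$. Finally, each $\phi_{k,m}(\fA)$ lies in some class $X$ and equals $H_{n_0}^X$ for some $n_0$; since $K_{n_0}^X \subseteq H_{n_0}^X$ and both lie in $X$, we have $[H_{n_0}^X : K_{n_0}^X] < \aleph_0$, so $H_{n_0}^X$ is a finite union of cosets of $K_{n_0}^X \in \mathcal{H}$. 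Lemma \ref{Lem_VCMinAbGrps} then yields VC-minimality.

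The main obstacle is reconciling the chain condition on $\mathcal{H}$ with the need to capture all the $\phi_{k,m}(\fA)$; this is resolved by exploiting the fact that upward coherence is a hereditary property under intersection with $H_X$ inside a $\sim$-class, together with the countability of $\PP(A)$, which lets the refinement in each class be carried out by a simple recursion.
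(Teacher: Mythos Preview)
Your proof is correct and essentially identical to the paper's own argument: both select the upward-coherence witness $H_X$ in each class, refine within the class by recursively intersecting with an enumeration of $X$ (using countability of $\PP(A)$), and then verify that the resulting family is a $\subseteq$-chain across classes via upward coherence before invoking Lemma~\ref{Lem_VCMinAbGrps}. The only cosmetic differences are that the paper starts its recursion at $H_X$ itself rather than $H_X \cap H_0^X$, and handles the cross-class comparison directly (observing $H_X \subseteq H$ for all $H$ in a higher class) rather than first noting that the upward-coherence property descends to each $K_n^X$.
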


\begin{proof}
 For each $X \in \PPt(A)$, let $H_X \in X$ witness that $X$ is upwardly coherent.  Since $\PP(A)$ is countable, so is $X$, so let $X = \set{H_i}{i < \omega}$ enumerate $X$.  Define $H^i_X \in X$ inductively as follows: 
\begin{itemize}
\item
$H^0_X = H_X$.
\item
For $i \ge 0$, $H^{i+1}_X = H^i_X \cap H_i$.
\end{itemize}
Since $X$ is closed under intersection, each $H^i_X$ is still an element of $X$.  

Let $\mathcal{H}_X = \set{H^i_X}{i < \omega}$.  By construction, $\mathcal{H}_X$ is a chain under $\subseteq$ with maximal element $H_X$.  Moreover, by definition of $\sim$, every $H \in X$ is a \emph{finite} union of cosets of a member of $\mathcal{H}_X$.  Finally, set
 \[
  \mathcal{H} = \bigcup \set{\mathcal{H}_X}{X \in \PPt(A)}.
 \]
 
For any distinct $X, Y \in \PPt(A)$, by Proposition \ref{Prop_dpMinChar} either $X \precnsim Y$ or $Y \precnsim X$.  Without loss, suppose $X \precnsim Y$.  Therefore, by upward coherence, $H \supseteq H_X$ for all $H\in Y$.  Hence, $\mathcal{H}_X \cup \mathcal{H}_Y$ is a chain under $\subseteq$.  It follows that $\mathcal{H}$ is itself a chain under $\subseteq$.  We thus conclude that $\mathcal{H}$ satisfies the hypotheses of Lemma \ref{Lem_VCMinAbGrps}, showing that $T$ is VC-minimal.
\end{proof}

Putting this all together, we arrive at the desired characterization of convexly orderable (and VC-minimal) abelian groups.

\begin{thm}\label{Thm_CharofVCMinAG}
 The following are equivalent:
 \begin{enumerate}
  \item $T$ is VC-minimal;
  \item $T$ is convexly orderable;
  \item $T$ is dp-minimal and $A$ is upwardly coherent.
 \end{enumerate}
\end{thm}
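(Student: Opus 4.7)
The plan is to prove the cycle $(1) \Rightarrow (2) \Rightarrow (3) \Rightarrow (1)$. The implications $(1) \Rightarrow (2)$ and $(3) \Rightarrow (1)$ are already in hand: the former is Corollary \ref{Cor_VCMinCO}, the latter is Lemma \ref{Lem_UpwardCohereVCMin}. The dp-minimality half of $(2) \Rightarrow (3)$ is Proposition \ref{COdp}. So the substantive step is showing that if $T$ is convexly orderable, then $A$ is upwardly coherent.

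I would argue by contrapositive and apply Lemma \ref{Lem_dpMinnotVCMin} to contradict convex orderability. Suppose some $X \in \PPt(A)$ fails upward coherence. Pick any $H_0 \in X$ and build recursively a descending sequence $H_0 \supsetneq H_1 \supsetneq H_2 \supsetneq \cdots$ in $X$, together with auxiliary $K_i \in \PP(A)$: given $H_i \in X$, the failure of upward coherence yields some $K_i \in \PP(A)$ with $H_i \precnsim K_i$ and $H_i \not\subseteq K_i$; set $H_{i+1} = H_i \cap K_i$. Since $H_i \precsim K_i$, the index $[H_i : H_{i+1}]$ is finite, so $H_{i+1} \in X$; and $H_{i+1} \subsetneq H_i$ since $H_i \not\subseteq K_i$. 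Note that each $K_i$ lies in a commensurability class strictly above $X$.

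Now set $A_i = K_0 \cap K_1 \cap \cdots \cap K_i$ and $B = H_0$; each of these is p.p.-definable, hence $\emptyset$-definable. A direct calculation gives $A_i \cap B = H_{i+1}$, so condition (1) of Lemma \ref{Lem_dpMinnotVCMin} is immediate: $A_i \cap B = H_{i+1} \supsetneq H_{i+2} = A_{i+1} \cap B$. For condition (2), I need $[A_i : A_i \cap B] \geq \aleph_0$, equivalently $A_i \not\precsim H_0$. This is precisely where dp-minimality enters: by Proposition \ref{Prop_dpMinChar}, $(\PPt(A), \precsim)$ is linearly ordered, so the commensurability class of a finite intersection equals the $\precsim$-minimum of the classes of its factors. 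Since every $[K_j] \succ X$, we get $[A_i] \succ X = [H_0]$, whence $A_i \not\precsim H_0$ as required. Lemma \ref{Lem_dpMinnotVCMin} then gives that $T$ is not convexly orderable, contradicting the hypothesis.

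The main obstacle is verifying condition (2): without the linearity afforded by dp-minimality, the intersection $A_i$ could collapse into a class not strictly above $X$, which would make $[A_i : A_i \cap H_0]$ finite and kill the argument. Linearity of $\PPt(A)$ is what pins down $[A_i]$ as the minimum of the $[K_j]$'s and so prevents $A_i$ from being almost contained in $B = H_0$.
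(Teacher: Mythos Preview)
Your proof is correct and follows essentially the same strategy as the paper: both reduce $(2)\Rightarrow(3)$ to building, from a failure of upward coherence, a descending chain of p.p.-definable subgroups meeting the hypotheses of Lemma~\ref{Lem_dpMinnotVCMin}. The only cosmetic differences are that the paper starts its chain at $A_0 = A$ (matching the lemma as stated) rather than at $K_0$, and verifies condition~(2) of the lemma by an inductive index computation instead of your cleaner observation that in a linearly ordered $\PPt(A)$ a finite intersection lands in the $\precsim$-minimum class of its factors.
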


\begin{proof}
 We have (1) $\Rightarrow$ (2) by Corollary \ref{Cor_VCMinCO}.  Lemma \ref{Lem_UpwardCohereVCMin} gives (3) $\Rightarrow$ (1).  Thus, it remains only to show (2) $\Rightarrow$ (3).
 
If $T$ is convexly orderable, then $T$ is dp-minimal by Proposition \ref{COdp}.  So, suppose that there exists some $X \in \PPt(A)$ that is not upwardly coherent.  Fixing any $B \in X$, we construct $A = A_0 \supseteq A_1 \supseteq A_2 \supseteq ...$ from $\PP(A)$ such that, for all $i < \omega$:
 \begin{enumerate}
  \item $[B : A_i \cap B] < \aleph_0$, so that $A_i \cap B \in X$; 
  \item If $i>0$, then $A_{i-1} \cap B \neq A_i \cap B$; and
  \item $[A_i : A_i \cap B] \ge \aleph_0$.
 \end{enumerate}
By Lemma \ref{Lem_dpMinnotVCMin}, this implies that $T$ is not convexly orderable, as required.  

First, set $A_0 = A$.  If $B \sim A$, then $A \in X$ trivially witnesses upward coherence, contrary to assumption.  Therefore, $[A : B] \ge \aleph_0$, giving condition (3) for $i=0$.  Clearly condition (1) and (2) also hold for $i=0$.  

Now fix $i \ge 0$ and suppose that $A_i$ has been constructed satisfying (1), (2), and (3).  Consider $A_i \cap B$.  Since $A_i \cap B \in X$ and $X$ is not upwardly coherent, there exists $H \in \PP(A)$ such that $A_i \cap B \precnsim H$ and $A_i \cap B \nsubseteq H$.  Set $A_{i+1} = H \cap A_i$. We show that $A_{i+1}$ satisfies (1), (2), and (3).
 
 Since $A_i \cap B \precsim H$, 
\[
[A_i \cap B : A_{i+1} \cap B] = [A_i \cap B : H \cap A_i \cap B] < \aleph_0,
\] 
giving condition (1).  Suppose $A_i \cap B = A_{i+1} \cap B$.  Then $H \cap (A_i \cap B) = A_i \cap B$ implies $(A_i \cap B) \subseteq H$, contrary to assumption.  Therefore, condition (2) holds.  

Finally, consider the inclusions
 \[
  (A_{i+1} \cap B) \subseteq A_{i+1} \subseteq A_i \text{ and } (A_{i+1} \cap B) \subseteq A_{i+1} \subseteq H.
 \]
 Since $[A_i : A_i \cap B] \ge \aleph_0$, $[A_i : A_{i+1} \cap B] \ge \aleph_0$.  Moreover, since $A_i \cap B \nsim H$, $[H : A_{i+1} \cap B] \ge \aleph_0$.  However, by Proposition \ref{Prop_dpMinChar}, at least one of $[H : A_{i+1}]$ and $[A_i : A_{i+1}]$ is finite, as either $H \precsim A_i$ or $A_i \precsim H$.  Therefore, from
\begin{align*}
[A_i : A_{i+1} \cap B] = [A_i : A_{i+1}][A_{i+1} : A_{i+1} \cap B] & \ge \aleph_0\\
[H : A_{i+1} \cap B] = [H : A_{i+1}][A_{i+1} : A_{i+1} \cap B] & \ge \aleph_0
\end{align*}
we obtain $[A_{i+1} : A_{i+1} \cap B] \ge \aleph_0$.  Hence, condition (3) holds.  This completes the construction, showing that $T$ is not convexly orderable.
\end{proof}

Before turning to the classification of VC-minimal abelian groups, we will need two lemmas. Both address the question of transferring VC-minimality between an abelian group and its direct summands. For groups $\fA = (A; +)$ and $\fB = (B; +)$, let $\fA \oplus \fB = (A \oplus B; +)$.

\begin{lem}\label{Lem_OtherDirectionOK}
 If $\fB$ is any abelian group and $\Th(\fA \oplus \fB)$ is VC-minimal, then $\Th(\fA)$ is VC-minimal.
\end{lem}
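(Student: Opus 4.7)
The plan is to prove the contrapositive, leveraging the just-proved characterization Theorem \ref{Thm_CharofVCMinAG}: if $\Th(\fA)$ is not VC-minimal, then either $\PPt(A)$ fails to be linear (by Proposition \ref{Prop_dpMinChar}) or $\fA$ fails to be upwardly coherent. In either case, I will exhibit witnesses of the analogous failure in $\fA \oplus \fB$ by \emph{lifting} p.p.\ formulas via the identity $\psi(\fA \oplus \fB) = \psi(\fA) \oplus \psi(\fB)$, together with the product formula for indices of direct sums, $[H_A \oplus H_B : H_A' \oplus H_B'] = [H_A : H_A'] \cdot [H_B : H_B']$. The one-sided consequence is that commensurability indices on $\fA\oplus\fB$ dominate those on $\fA$, so failures of finiteness survive the lift.

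In the non-dp-minimal case, pick p.p.\ formulas $\psi_1,\psi_2$ such that neither $\psi_1(A)\precsim\psi_2(A)$ nor $\psi_2(A)\precsim\psi_1(A)$. The lifts $\psi_i(\fA\oplus\fB)=\psi_i(A)\oplus\psi_i(B)$ are then likewise incomparable under $\precsim$ by the product formula, so $\PPt(A\oplus B)$ is not linear and $\Th(\fA\oplus\fB)$ is not dp-minimal by Proposition \ref{Prop_dpMinChar}, hence not VC-minimal.

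In the non-upwardly-coherent case, I extract from the proof of Theorem \ref{Thm_CharofVCMinAG} a chain $A=A_0\supseteq A_1\supseteq\cdots$ and a subgroup $C$, all in $\PP(A)$, satisfying the hypotheses of Lemma \ref{Lem_dpMinnotVCMin}. Because that construction builds each $A_{i+1}$ as $A_i$ intersected with another member of $\PP(A)$, I can choose defining p.p.\ formulas $\alpha_i$ so that $\alpha_{i+1}$ is literally a conjunction of $\alpha_i$ with a further p.p.\ formula; this makes the entailment $\alpha_{i+1}\vdash\alpha_i$ logical rather than merely semantic in $\fA$. Consequently the lifted chain $\tilde A_i := \alpha_i(\fA\oplus\fB) = A_i\oplus\alpha_i(B)$ is descending in $\fA\oplus\fB$. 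Letting $\tilde C := \gamma(\fA\oplus\fB)$ with $\gamma$ a p.p.\ formula defining $C$, the strict inclusions $\tilde A_i\cap\tilde C\supsetneq\tilde A_{i+1}\cap\tilde C$ come immediately from the $\fA$-components of the direct-sum decomposition of the intersections, and the infinite-index condition $[\tilde A_i : \tilde A_i\cap\tilde C]\ge\aleph_0$ follows from the product formula. Lemma \ref{Lem_dpMinnotVCMin} then gives that $\Th(\fA\oplus\fB)$ is not convexly orderable, hence not VC-minimal.

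The main subtlety arises in the non-upwardly-coherent case and is syntactic/semantic in nature: semantic inclusion $A_{i+1}\subseteq A_i$ in $\fA$ need not correspond to logical entailment between arbitrary defining formulas, and without logical entailment the lifted subgroups $\tilde A_i$ might fail to nest in $\fA\oplus\fB$ (where the $\fB$-components can be nontrivial and badly behaved). Fortunately, the inductive construction in Theorem \ref{Thm_CharofVCMinAG} naturally yields logically nested defining formulas, so this obstacle dissolves with no extra work.
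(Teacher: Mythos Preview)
Your proof is correct but takes a genuinely different route from the paper. The paper argues \emph{directly}: assuming $\Th(\fA\oplus\fB)$ is VC-minimal, it invokes the chain $\mathcal{H}\subseteq\PP(A\oplus B)$ produced in the proof of Lemma~\ref{Lem_UpwardCohereVCMin}, observes that each $H\in\mathcal{H}$ splits as a direct sum (so that the projection $\pi_A(H)$ lies in $\PP(A)$), and checks that $\mathcal{H}_A=\pi_A(\mathcal{H})$ inherits linearity under $\subseteq$ and still generates each $\phi_{k,m}(\fA)$ as a finite union of cosets; Lemma~\ref{Lem_VCMinAbGrps} then finishes. This is a three-line argument once the projection map is identified. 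Your contrapositive argument instead \emph{lifts} obstructions from $\fA$ to $\fA\oplus\fB$: incomparable p.p.\ subgroups in the non-dp-minimal case, and the Lemma~\ref{Lem_dpMinnotVCMin} data $(A_i,C)$ in the non-upwardly-coherent case. The product formula for indices makes both lifts go through, and your observation that the inductive construction in Theorem~\ref{Thm_CharofVCMinAG} yields \emph{logically} nested defining formulas is exactly what is needed to preserve the chain condition on the $\fB$-side. Your approach is longer and requires re-entering the proof of Theorem~\ref{Thm_CharofVCMinAG}, but it has the merit of making explicit which concrete obstruction transfers; the paper's projection argument is slicker and avoids the case split entirely.
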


\begin{proof}
Assume $T^* = \Th(\fA \oplus \fB)$ is VC-minimal.  By Theorem \ref{Thm_CharofVCMinAG} (3), $T^*$ is dp-minimal and $A \oplus B$ is upwardly coherent.  By the proof of Lemma \ref{Lem_UpwardCohereVCMin}, there exists $\mathcal{H} \subseteq \PP(A \oplus B)$ such that $(\mathcal{H}; \subseteq)$ is a linear order and, for all $k$ and $m$, $\phi_{k,m}(\fA \oplus \fB)$ is a finite union of cosets of some $H_{k,m} \in \mathcal{H}$. Thus, we may write
 \[
  \phi_{k,m}(\fA \oplus \fB) = \bigcup_{i \le n} (a_i\oplus b_i)+H_{k,m}
 \]
 for some choice of $a_i \in A$, $b_i \in B$.

If $\pi_A$ denotes the projection of $\fA\oplus\fB$ onto $\fA$, note that $\pi_A(\phi_{k,m}(\fA\oplus\fB))=\phi_{k,m}(\fA)$. So, clearly, $\mathcal{H}_A=\pi_A(\mathcal{H})$ is also linearly ordered by $\subseteq$.  Moreover, we have 
\[
  \phi_{k,m}(\fA) = \bigcup_{i \le n} a_i + \pi_A(H_{k,m}).
 \]
Therefore, using $\mathcal{H}_A$ in Lemma \ref{Lem_VCMinAbGrps}, we see that $T = \Th(\fA)$ is VC-minimal.
\end{proof}

\begin{lem}\label{Lem_FiniteDoesntMatter}
 If $\fB$ is a finite abelian group, then $\Th(\fA)$ is VC-minimal if and only if $\Th(\fA \oplus \fB)$ is VC-minimal.
\end{lem}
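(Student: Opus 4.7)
The reverse implication ($\Leftarrow$) is immediate from Lemma \ref{Lem_OtherDirectionOK}, which places no finiteness restriction on $\fB$. For the forward direction ($\Rightarrow$), the plan is to apply Theorem \ref{Thm_CharofVCMinAG}: it suffices to show that both dp-minimality and upward coherence transfer from $\fA$ to $\fA \oplus \fB$ when $\fB$ is finite.

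The structural backbone is that $\phi_{k,m}(\fA \oplus \fB) = \phi_{k,m}(\fA) \oplus \phi_{k,m}(\fB)$, so every $H \in \PP(A \oplus B)$ decomposes uniquely as $H_A \oplus H_B$ with $H_A \in \PP(A)$, $H_B \in \PP(B)$ cut out by a \emph{common} intersection of $\phi_{k_i,m_i}$'s. Because $\fB$ is finite, each such $H_B$ is finite, contributing only a finite factor to any commensurability index $[H_A \oplus H_B : (H_A \cap H_A') \oplus (H_B \cap H_B')]$. It follows that $[H_A \oplus H_B] \mapsto [H_A]$ is a well-defined, $\precsim$-preserving bijection $\PPt(A \oplus B) \to \PPt(A)$, and Proposition \ref{Prop_dpMinChar} then transfers dp-minimality in both directions.

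For upward coherence, given $X \in \PPt(A \oplus B)$ corresponding to $X_A \in \PPt(A)$, I would take an upward-coherence witness $H_A \in X_A$ in $\fA$ and construct the desired witness for $X$ as $H := H_A \oplus H_B^*$, where $H_B^* := \bigcap_{(k,m) \in J} \phi_{k,m}(\fB)$ with $J := \set{(k,m)}{H_A \subseteq \phi_{k,m}(\fA)}$. Since $\PP(B)$ is finite, this intersection stabilizes after finitely many terms, and adjoining a corresponding finite index set to any defining representation of $H_A$ places $H$ genuinely in $\PP(A \oplus B)$; clearly $H \in X$. For any $K = K_A \oplus K_B \in \PP(A \oplus B)$ with $H \precnsim K$, the $\PPt$-isomorphism gives $H_A \precnsim K_A$, hence $H_A \subseteq K_A$ by upward coherence of $H_A$; but then each defining pair $(k_j, m_j)$ for $K$ automatically lies in $J$, forcing $K_B \supseteq H_B^*$ and thus $K \supseteq H$.

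The main obstacle is this upward-coherence transfer, where the ``$B$-component'' $H_B^*$ of the witness must be chosen so that \emph{every} p.p.-definable $K$ strictly $\precsim$-larger than $H$ automatically contains $H_B^*$ in its $B$-coordinate. The finiteness of $\PP(B)$ is exactly what makes this possible, collapsing an \emph{a priori} infinite intersection into a finite one.
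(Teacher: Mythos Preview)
Your proof is correct and takes a genuinely different route from the paper's. The paper proves the forward direction by constructing a generating chain for $\fA\oplus\fB$ directly: starting from a chain $\mathcal{H}\subseteq\PP(A)$ as in Lemma~\ref{Lem_UpwardCohereVCMin}, it sets $\mathcal{H}^*=\{H\oplus B(H):H\in\mathcal{H}\}$ where $B(H)$ is the $\subseteq$-minimal subgroup of $B$ with $H\oplus B(H)\in\PP(A\oplus B)$, and then verifies the hypotheses of Lemma~\ref{Lem_VCMinAbGrps} for $\mathcal{H}^*$. You instead pass through the characterization Theorem~\ref{Thm_CharofVCMinAG}, transferring dp-minimality via the order isomorphism $\PPt(A\oplus B)\cong\PPt(A)$ and transferring upward coherence by building a witness $H_A\oplus H_B^*$ for each class.

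The two approaches are closer than they first appear: your $H_B^*=\bigcap_{(k,m)\in J}\phi_{k,m}(\fB)$ is exactly the paper's $B(H_A)$, since any presentation of $H_A\oplus B'$ in $\PP(A\oplus B)$ uses indices from $J$ (forcing $B'\supseteq H_B^*$), while you verify $H_A\oplus H_B^*\in\PP(A\oplus B)$. What differs is the packaging. The paper's argument is self-contained at the level of Lemma~\ref{Lem_VCMinAbGrps} and never invokes Theorem~\ref{Thm_CharofVCMinAG}; yours is more structural and makes explicit that finiteness of $\fB$ collapses the commensurability lattice. Either way there is no circularity, since Theorem~\ref{Thm_CharofVCMinAG} is established before this lemma.
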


\begin{proof}
 Suppose $T = \Th(\fA)$ is VC-minimal.  Again recalling the proof of Lemma \ref{Lem_UpwardCohereVCMin}, there exists $\mathcal{H} \subseteq \PP(A)$ so that $(\mathcal{H}; \subseteq)$ is a chain and, for all $k$ and $m$, $\phi_{k,m}(\fA)$ is a finite union of cosets of some $H_{k,m}\in\mathcal{H}$.  For each $H \in \mathcal{H}$, choose a subgroup $B(H) \subseteq B$ minimal (with respect to $\subseteq$) such that $H \oplus B(H) \in \PP(A \oplus B)$.  Finally, let
 \[
  \mathcal{H}^* = \set{H \oplus B(H)}{H \in \mathcal{H}}.
 \]
 We verify that $\mathcal{H}^*$ satisfies the hypotheses of Lemma \ref{Lem_VCMinAbGrps} for $\fA \oplus \fB$.
 
 First, to see that $\mathcal{H}^*$ is a linear order under $\subseteq$, suppose $H_1 \subseteq H_2$ from $\mathcal{H}$. As  
\[
(H_1 \oplus B(H_1)) \cap (H_2 \oplus B(H_2)) = H_1 \oplus (B(H_1) \cap B(H_2))
\] 
is again an element of $\PP(A \oplus B)$, the minimality of $B(H_1)$ implies $B(H_1)=B(H_1) \cap B(H_2)$. Thus $B(H_1)\subseteq B(H_2)$ and $H_1\oplus B(H_1) \subseteq H_2\oplus B(H_2)$.
 
Second, we wish to show that $\phi_{k,m}(\fA \oplus \fB)$ is a boolean combination of cosets of elements of $\mathcal{H}^*$. Since we already know that $\phi_{k,m}(\fA)$ is a finite union of cosets of $H_{k,m}$, and $B$ is finite, it suffices to show that
 \[
  H_{k,m} \oplus B(H_{k,m}) \subseteq \phi_{k,m}(\fA) \oplus \phi_{k,m}(\fB) = \phi_{k,m}(\fA \oplus \fB).
 \]
 That is, we need to show $B(H_{k,m}) \subseteq \phi_{k,m}(\fB)$.  If not, however,
\[
\left(H_{k,m} \oplus B(H_{k,m})\right) \cap \phi_{k,m}(\fA \oplus \fB) = H_{k,m} \oplus \left(B(H_{k,m}) \cap \phi_{k,m}(\fB)\right)
\]
would be in $\PP(A \oplus B)$, in which case $B(H_{k,m}) \cap \phi_{k,m}(\fB)$ would contradict the minimality of $B(H_{k,m})$.
 
Therefore, $\mathcal{H}^*$ satisfies the conditions of Lemma \ref{Lem_VCMinAbGrps}, proving VC-minimality of $\Th(\fA\oplus\fB)$. The converse follows immediately from Lemma \ref{Lem_OtherDirectionOK}.
\end{proof}

We are now ready to prove an analog to Proposition \ref{Prop_ADHMS527} for VC-minimal (and convexly orderable) theories of abelian groups. The proposition gives a strong starting point, a complete list of dp-minimal theories of abelian groups. Theorem \ref{Thm_CharofVCMinAG} and the above lemmas provide a set of tools for determining which of these are VC-minimal.

\begin{thm}\label{Prop_VCMinAG}
$T$ is VC-minimal (and convexly orderable) if and only if $\fA$ is elementarily equivalent to one of the following abelian groups:
 \begin{enumerate}
  \item 
  $\bigoplus\limits_{p \text{ prime}} \left( \mathbb{Z}\left( p^\infty \right)^{(\beta_p)} \right) \oplus \left( \mathbb{Z}_{(q)} \right)^{(\gamma)} \oplus \mathbb{Q}^{(\delta)} \oplus B$ for a fixed prime $q$, finite abelian group $B$, and cardinals $\beta_p$, $\gamma$, and $\delta$ such that $\beta_p < \aleph_0$ for all $p \neq q$;
  \item 
  $\bigoplus\limits_{p \text{ prime}} 
  \left( B_p \oplus
    \mathbb{Z}\left( p^\infty \right)^{(\beta_p)} \oplus 
    \left( \mathbb{Z}_{(p)} \right)^{(\gamma_p)}\right) \oplus 
    \mathbb{Q}^{(\delta)}$ for a fixed prime $q$, finite $p$-groups $B_p$, and cardinals $\beta_p$, $\gamma_p$, and $\delta$ such that 
    $\beta_p < \aleph_0$ for all $p \neq q$ and
     $\gamma_p < \aleph_0$ for all $p$ (incuding $q$);
  \item $\left( \mathbb{Z} / p^k \mathbb{Z} \right)^{(\alpha)} \oplus \left( \mathbb{Z} / p^{k+1} \mathbb{Z} \right)^{(\beta)} \oplus B$ for some prime $p$, $k \ge 1$, finite abelian group $B$, and cardinals $\alpha$ and $\beta$, at least one of which is infinite.
 \end{enumerate}
\end{thm}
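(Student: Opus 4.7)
The plan is to combine Theorem \ref{Thm_CharofVCMinAG} (which equates VC-minimality with dp-minimality plus upward coherence) with the dp-minimal classification of Proposition \ref{Prop_ADHMS527}. Since VC-minimality and convex orderability coincide by Theorem \ref{Thm_CharofVCMinAG}, it suffices to characterize VC-minimality. Observe first that each of the three listed forms is already dp-minimal: form (3) here matches form (2) of Proposition \ref{Prop_ADHMS527} verbatim, while forms (1) and (2) both fit form (1) there upon appropriate choice of the special prime $q$ and of the nonsingular part.

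For the ``if'' direction, I would verify upward coherence in each of (1), (2), and (3). Since p.p.-formulas act componentwise on direct sums, $\PP(\fA)$ decomposes along the summands: each $\mathbb{Z}(p^\infty)^{(\beta_p)}$ contributes the chain of torsion subgroups $A[p^i]$, each $\mathbb{Z}_{(p)}^{(\gamma_p)}$ contributes a chain of $p^j$-multiples, $\mathbb{Q}^{(\delta)}$ contributes nothing nontrivial, and finite summands may be absorbed using Lemma \ref{Lem_FiniteDoesntMatter}. Under the cardinality constraints in our three cases---at most one prime $q$ has infinite $\beta_q$ or infinite $\gamma$---the commensurability classes in $\PPt(\fA)$ collapse into a single linearly ordered sequence indexed by the special prime $q$, and in each class there is a canonical minimum contained in every strictly larger class. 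Upward coherence follows, so VC-minimality follows by Theorem \ref{Thm_CharofVCMinAG}. Case (3) can alternatively be read off from Corollary \ref{Cor_OtherVCMin}(2) together with Lemma \ref{Lem_FiniteDoesntMatter}.

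For the ``only if'' direction, assume $T$ is VC-minimal. By Theorem \ref{Thm_CharofVCMinAG} and Proposition \ref{Prop_ADHMS527}, $\fA$ is elementarily equivalent to one of the two dp-minimal forms. If $\fA$ matches form (2) of Proposition \ref{Prop_ADHMS527}, we are already in our case (3). Otherwise $\fA \equiv \bigoplus_i (\mathbb{Z}/p^i\mathbb{Z})^{(\alpha_i)} \oplus \mathbb{Z}(p^\infty)^{(\beta)} \oplus \mathbb{Z}_{(p)}^{(\gamma)} \oplus B'$ with $B'$ nonsingular. Corollary \ref{Cor_ExamplesNotVCMinAG} together with Lemma \ref{Lem_OtherDirectionOK} forces all but finitely many $\alpha_i$ to vanish, collapsing the cyclic part into a finite group. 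Applying Szmielew's classification to the nonsingular $B'$ produces a direct sum of copies of $\mathbb{Q}$, $\mathbb{Z}(r^\infty)$, $\mathbb{Z}_{(r)}$, and finite $r$-group summands, all with finite multiplicities by the nonsingularity hypothesis. Combining with the special-prime piece, $\fA$ already has the shape of case (1) or (2) according to whether $\gamma$ is infinite. What remains is to rule out ``mixed'' direct summands of the form $\mathbb{Z}_{(p)}^{(\gamma)} \oplus \mathbb{Z}_{(r)}^{(\gamma_r)}$ with $p \neq r$, $\gamma$ infinite, and $\gamma_r \geq 1$. For this, apply Lemma \ref{Lem_dpMinnotVCMin} to this summand with $A_i$ the $r^i$-multiples and $B$ the $p$-multiples: the chain $A_i$ intersects $B$ properly at each step, while the index $[A_i : A_i \cap B]$ is infinite thanks to $\gamma$ being infinite. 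This produces a failure of convex orderability on the summand which, by Lemma \ref{Lem_OtherDirectionOK}, lifts to a failure on $\fA$ itself. Hence if $\gamma$ is infinite, no other prime contributes a $\mathbb{Z}_{(r)}$ summand, putting us in case (1); otherwise every $\mathbb{Z}_{(p)}^{(\gamma_p)}$ has finite $\gamma_p$ and we are in case (2).

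The main obstacle is this mixed-prime exclusion step: the final check requires an explicit obstruction via Lemma \ref{Lem_dpMinnotVCMin}, since Corollary \ref{Cor_ExamplesNotVCMinAG} addresses only infinite cyclic torsion and the general dp-minimality machinery does not rule out such mixed torsion-free configurations. Once this exclusion is in place, the rest of the ``only if'' direction is a bookkeeping exercise matching Szmielew invariants against the three listed forms.
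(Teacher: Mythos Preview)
Your overall architecture matches the paper's: reduce to the dp-minimal list via Proposition~\ref{Prop_ADHMS527}, dispose of its case~(2) as your case~(3), decompose the nonsingular part by Szmielew, kill the infinite cyclic tower at the special prime via Corollary~\ref{Cor_ExamplesNotVCMinAG} and Lemma~\ref{Lem_OtherDirectionOK}, and then separate cases by whether $\gamma_q$ is infinite. The one methodological difference is that for the exclusion step you invoke Lemma~\ref{Lem_dpMinnotVCMin} directly on a summand, whereas the paper argues via failure of upward coherence of the $\sim$-class of $qA$ and appeals to Theorem~\ref{Thm_CharofVCMinAG}. Both are legitimate; yours is more concrete, the paper's is more uniform.

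There is, however, a genuine gap in your ``only if'' direction when $\gamma=\gamma_q$ is infinite. You exclude only the mixed torsion-free configuration $\mathbb{Z}_{(q)}^{(\gamma)}\oplus\mathbb{Z}_{(r)}^{(\gamma_r)}$ with $r\ne q$ and $\gamma_r\ge 1$. But case~(1) also requires the residual group $B$ to be \emph{finite}, i.e.\ only finitely many of the finite $p$-groups $B_p$ may be nonzero. Nothing in your argument rules out $\gamma_q\ge\aleph_0$ together with $B_p\ne 0$ for infinitely many primes $p$; such a group is dp-minimal (the $B_p$ sit inside the nonsingular part) and is not of form~(1) or~(2), so you must show it is not VC-minimal. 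The paper handles this by observing that $qA\precnsim\bigl(\prod_{i\le n}p_i\bigr)A$ for every $n$, yet no $H\sim qA$ in $\PP(A)$ can lie below all of these, so upward coherence fails. Your Lemma~\ref{Lem_dpMinnotVCMin} approach can be adapted just as easily: take $A_i=\bigl(\prod_{j\le i}p_j\bigr)A$ and $B=qA$, and check that $A_i\cap B\ne A_{i+1}\cap B$ (the $B_{p_{i+1}}$-coordinate drops) while $[A_i:A_i\cap B]\ge\aleph_0$ (coming from the $\mathbb{Z}_{(q)}^{(\gamma)}$ factor). Once you add this second exclusion, the argument is complete.
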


\begin{proof}
 Suppose $T$ is dp-minimal.  By Proposition \ref{Prop_ADHMS527}, it falls under one of two categories.  $T$ is either the theory of a group as in (3) above; or, $\fA$ is elementarily equivalent to
 \begin{equation}\label{eqn5141}
\bigoplus\limits_{i\ge 1} \left(\mathbb{Z} / p^i \mathbb{Z} \right)^{(\alpha_{p.i})} \oplus
 \mathbb{Z}\left( p^\infty \right)^{(\beta_p)} \oplus 
 \left( \mathbb{Z}_{(p)} \right)^{(\gamma_p)} \oplus B
 \tag{$\star$}
 \end{equation}
for a prime $p$, nonsingular abelian group $B$, and cardinals $\alpha_{p,i}$, $\beta_p$, and $\gamma_p$ with each $\alpha_{p,i}$ finite. 

For the former category, it follows from Corollary \ref{Cor_OtherVCMin} and Lemma \ref{Lem_FiniteDoesntMatter} that the group in (3) is also VC-minimal.
 
For the latter, first recall that by results of Szmielew \cite{szm}, any abelian group is elementarily equivalent to one of the form
\[
\bigoplus\limits_{p \text{ prime}} 
  \left(
  \bigoplus\limits_{i\ge 1}\left(\mathbb{Z}/p^i\mathbb{Z}\right)^{(\alpha_{p,i})} \oplus
    \mathbb{Z}\left( p^\infty \right)^{(\beta_p)} \oplus 
    \left( \mathbb{Z}_{(p)} \right)^{(\gamma_p)}\right) \oplus 
    \mathbb{Q}^{(\delta)}.
\]
It is straightforward to verify that such a group is only nonsingular if each $\alpha_{p,i}$, $\beta_p$, $\gamma_p$, and $\set{i}{\alpha_{p,i}>0}$ is finite. For instance, for $B=\left(\mathbb{Z}_{(p)}\right)^{(\gamma_p)}$, we have $B/pB=\left(\mathbb{Z}/p\mathbb{Z}\right)^{(\gamma_p)}$, which is finite iff $\gamma_p$ is. Hence, (\ref{eqn5141}) becomes
\begin{equation}\label{eqn5142}
\bigoplus\limits_{p \text{ prime}} 
  \left( \bigoplus\limits_{i\ge 1}\left(\mathbb{Z}/p^i\mathbb{Z}\right)^{(\alpha_{p,i})} \oplus
    \mathbb{Z}\left( p^\infty \right)^{(\beta_p)} \oplus 
    \left( \mathbb{Z}_{(p)} \right)^{(\gamma_p)}\right) \oplus 
    \mathbb{Q}^{(\delta)}
\tag{$\dagger$}
\end{equation}
with each $\alpha_{p,i}$ finite and $\beta_p$, $\gamma_p$, and $\set{i}{\alpha_{p,i}>0}$ finite for $p\ne q$. In other words, writing $B_p=\bigoplus\limits_{i\ge 1}\left(\mathbb{Z}/p^i\mathbb{Z}\right)^{(\alpha_{p,i})}$, we have that $B_p$ is a finite $p$-group for all $p\ne q$.

Suppose, then, that (\ref{eqn5142}) is VC-minimal. We show that (\ref{eqn5142}) is as in (1) or (2). By Corollary \ref{Cor_ExamplesNotVCMinAG} and Lemma \ref{Lem_OtherDirectionOK}, $B_q$ must also be finite. If $\gamma_q<\aleph_0$, then we are in case (2). 

Thus, suppose that $\gamma_p\ge\aleph_0$. Notice that $qA \precnsim A$.
We must show that $B=\bigoplus_pB_p$ is finite and $\gamma_p=0$ for $p\ne q$.

If $\gamma_p>0$ for some $p\ne q$, then $qA\precnsim p^nA$ for all $n$. However, there is no $H\in\PP(A)$ with $H\sim qA$ such that $H\subseteq p^nA$ for all $n$. Therefore, the $\sim$-class of $qA$ is not upwardly coherent, contradicting Theorem \ref{Thm_CharofVCMinAG}.

If $B_p$ is nonzero for infinitely many primes $p$, let $p_0,p_1,\ldots$ enumerate all such primes, excluding $q$. Then we have
\[
qA\precnsim \left(\prod\limits_{i\le n}p_i\right)A.
\]
But there is no $H\in\PP(A)$ with $H\sim qA$ such that $H\subseteq \left(\prod_{i\le n}p_n\right) A$ for every $n$, again contradicting upward coherence of the $\sim$-class of $qA$.

We have thus established that the theory of a VC-minimal abelian group belongs to one of the cases (1), (2), or (3). It remains only to show that the groups in (1) and (2) are indeed VC-minimal.

For both cases, $A[q^n]$ witnesses the upward coherence of its $\sim$-class for every $n$.  In case (2), $k A \sim A$ for all $k$, so the chain of $\PPt(A)$ is given by
 \[
  0 \precnsim A[q] \precnsim A[q^2] \precnsim ... \precnsim A,
 \]
 and each $\sim$-class is upwardly coherent. Furthermore, each group in $\PP(A)$ is a boolean combination of cosets of groups in this chain.  The details of this computation can be found in Lemma 5.28 of \cite{ADHMS}.

  In case (1), in addition to $A[q^n]$, we also have that $q^n A$ witnesses the upward coherence of its $\sim$-class.  The chain of $\PPt(A)$ is given by
 \[
  0 \precnsim A[q] \precnsim A[q^2] \precnsim ... \precnsim q^2A \precnsim qA \precnsim A.
 \]
Again, we refer to Lemma 5.28 of \cite{ADHMS} to see that the groups in this chain generate every member of $\PP(A)$. 

In both cases, therefore, $A$ is upwardly coherent.  By Theorem \ref{Thm_CharofVCMinAG}, $T$ is VC-minimal. 
 \end{proof}
 
 \subsection*{Acknowledgments} We would like to express our thanks to the referee for pointing out an error in the original draft of Proposition \ref{Prop_ADHMS527} and consequently, Theorem \ref{Prop_VCMinAG}.

\begin{bibdiv}
\begin{biblist}

\bib{acfm}{article}{
   author={Andrews, Uri},
   author={Cotter, Sarah},
   author={Freitag, James},
   author={Medvedev, Alice},
   title={VC-minimality: Examples and Observations},
   journal={in preparation}
}

\bib{adl}{article}{
   author={Adler, Hans},
   title={Theories controlled by formulas of Vapnik-Chervonenkis codimension 1},
   journal={Preprint},
   date={2008}
} 

\bib{ADHMS}{article}{
   author={Aschenbrenner, M.},
   author={Dolich, A.},
   author={Haskell, D.},
   author={MacPherson, D.},
   author={Starchenko, S.},
   title={Vapnik-Chervonenkis density in some theories without the independence property, II},
   journal={Notre Dame J. Formal Logic},
   date={to appear}
}

\bib{cs}{article}{
   author={Cotter, Sarah},
   author={Starchenko, Sergei},
   title={Forking in VC-minimal theories},
   journal={J. Symbolic Logic},
   volume={77},
   date={2012},
   number={4},
   pages={1257--1271}
} 

\bib{mad}{article}{
   author={Dickmann, M.A.},
   title={Elimination of quantifiers for ordered valuation rings},
   journal={J. Symbolic Logic},
   volume={52},
   date={1987},
   number={1},
   pages={116--128}
}

\bib{dgl}{article}{
   author={Dolich, Alfred},
   author={Goodrick, John},
   author={Lippel, David},
   title={Dp-minimality: basic facts and examples},
   journal={Notre Dame J. Formal Logic},
   volume={52},
   number={3},
   date={2011},
   pages={267--288}
}

\bib{fg}{article}{
   author={Flenner, Joseph},
   author={Guingona, Vincent},
   title={Canonical forests in directed families},
   journal={Proc. Amer. Math. Soc.},
   date={to appear}
} 

\bib{gl}{article}{
   author={Guingona, Vincent},
   author={Laskowski, M.C.},
   title={On VC-Minimal Theories and Variants},
   journal={Arch. Math. Logic},
   date={to appear}
}

\bib{hol1}{article}{
   author={Holly, Jan E.},
   title={Canonical forms for definable subsets of algebraically closed and
   real closed valued fields},
   journal={J. Symbolic Logic},
   volume={60},
   date={1995},
   number={3},
   pages={843--860},
   issn={0022-4812}
}

\bib{mms}{article}{
   author={MacPherson, Dugald},
   author={Marker, David},
   author={Steinhorn, Charles},
   title={Weakly o-minimal structures and real closed fields},
   journal={Trans. Amer. Math. Soc.},
   volume={352},
   date={2000},
   number={12},
   pages={5435--5483}
}

\bib{pilst}{article}{
   author={Pillay, Anand},
   author={Steinhorn, Charles},
   title={Definable sets in ordered structures I},
   journal={Trans. Amer. Math. Soc.},
   volume={295},
   date={1986},
   number={2},
   pages={565--592}
}

\bib{prest}{book}{
   author={Prest, Mike},
   title={Model Theory and Modules},
   series={London Mathematical Society Lecture Note Series},
   volume={130},
   publisher={Cambridge University Press},
   place={Cambridge},
   date={1988}
}

\bib{sim}{article}{
   author={Simon, Pierre},
   title={On dp-minimal ordered structures},
   journal={J. Symbolic Logic},
   volume={76},
   date={2011},
   pages={448--460}
}

\bib{szm}{article}{
   author={Szmielew, Wanda},
   title={Elementary properties of Abelian groups},
   journal={Fund. Math.},
   volume={41},
   date={1955},
   pages={203--271}
}

\end{biblist}
\end{bibdiv}

\end{document}